\newtheorem{lemma}{Lemma}
\newtheorem{theorem}{Theorem}
\theoremstyle{remark}
\newtheorem*{remark}{\bf Remark}
\renewcommand{\Re}{\operatorname{Re}}
\newcommand{\Li}{\operatorname{Li}}
\newcommand{\Z}{\mathbb{Z}}
\newcommand{\R}{\mathbb{R}}
\newcommand{\C}{\mathbb{C}}
\newcommand{\N}{\mathbb{N}}
\newcommand{\e}{\operatorname{e}}
\newcommand{\modu}{\operatorname{mod}}
\newcommand{\real}{\operatorname{Re}}
\patchcmd{\section}{\scshape}{\bfseries}{}{}
\renewcommand{\@secnumfont}{\bfseries}
\numberwithin{equation}{section}
\numberwithin{lemma}{section}
\numberwithin{theorem}{section}
\numberwithin{prop}{section}
\begin{document}

\title{Polynomial partition asymptotics}

\author{Alexander Dunn}
\address{Department of Mathematics, University of Illinois, 1409 West Green
Street, Urbana, IL 61801, USA}
\email{ajdunn2@illinois.edu}

\author{Nicolas Robles}
\address{Department of Mathematics, University of Illinois, 1409 West Green
Street, Urbana, IL 61801, USA}
\email{nirobles@illinois.edu}
\address{Wolfram Research Inc, 100 Trade Center Dr, Champaign, IL 61820, USA}
\email{nicolasr@wolfram.com}

\subjclass[2010]{Primary 11P05, 11P55, 11P82}
\keywords{Partitions, Hardy--Littlewood circle method, polylogarithm}

\maketitle

\begin{abstract}
Let $f \in \mathbb{Z}[y]$ be a polynomial such that $f(\mathbb{N}) \subseteq \mathbb{N}$, and let $p_{\mathcal{A}_{f}}(n)$ denote number of partitions of $n$ whose parts lie in the set $\mathcal{A}_f:=\{f(n):n \in \mathbb{N}\}$. Under hypotheses on the roots of $f-f(0)$, we use the Hardy--Littlewood circle method, a polylogarithm identity, and the Matsumoto--Weng zeta function  to derive asymptotic formulae for $p_{\mathcal{A}_f}(n)$ as $n$ tends to infinity. This generalises asymptotic formulae for the number of partitions into perfect $d$th powers, established by Vaughan for $d=2$, and Gafni for the case $d \geq 2$, in 2015 and 2016 respectively.
\end{abstract}

\section{Introduction and Preliminaries} \label{intro}
A partition of a positive integer $n$ is a non-decreasing sequence of positive integers whose sum is $n$. Let $\mathcal{A} \subseteq \mathbb{N}$ and $p_{\mathcal{A}}(n)$ denote the number of partitions of $n$ such that each part of the partition is restricted to be an element of $\mathcal{A}$. When $\mathcal{A}:=\mathbb{N}$, we obtain the well studied unrestricted partition function, usually denoted by $p(n)$. Let $f \in \mathbb{Z}[y]$ be a polynomial such that $f(\mathbb{N}) \subseteq \mathbb{N}$. Then we define $p_{\mathcal{A}_{f}}(n)$ to be the number of partitions of $n$ whose parts lie in the set $\mathcal{A}_f:=\{f(n):n \in \mathbb{N}\}$. Under mild hypotheses on $f$, we derive an asymptotic formula for $p_{\mathcal{A}_f}(n)$ using the Hardy--Littlewood circle method and a fine analysis of the Matsumoto--Weng zeta function \cite{MW}. 

In 1918, Hardy and Ramanujan initiated the analytic study of $p(n)$ with the use of the celebrated Hardy--Littlewood circle method \cite{HR}.  They proved 
\begin{equation*}
p(n) \sim \frac{1}{4 \sqrt{3} n} e^{\pi \sqrt{2n/3}} \quad \text{as} \quad n \rightarrow \infty.
\end{equation*}
For fixed $k \geq 2$ they also conjectured an asymptotic formula for the restricted partition function $p_{\mathcal{A}_k}(n)$, where $\mathcal{A}_k$ denotes the set of perfect $k$th powers. Later in 1934, Wright \cite{W} provided proof for Hardy and Ramanujan's conjectured formula concerning $p_{\mathcal{A}_k}(n)$. However, Wright's proof relied heavily on a transformation for the generating function for the sequence $\{p_{\mathcal{A}_k}(n)\}$ that involved generalised Bessel functions. 

Vaughan has recently established a simplified asymptotic formula for $p_{\mathcal{A}_k}(n)$ in the case $k=2$ \cite{V2}. This  was subsequently generalised for all $k \geq 2$ by Gafni \cite{G}. Using the ideas from \cite{V2} and \cite{G}, Berndt, Malik, and Zaharescu in \cite{BMZ} have derived an asymptotic formula for restricted partitions in which each part is a $k$th power in an arithmetic progression. More precisely, for fixed $a_0,b_0,k \in \mathbb{N}$ with $(a_0,b_0)=1$, they give an asymptotic for $p_{\mathcal{A}_k(a_0,b_0)}(n)$, as $n$ tends to infinity, where $\mathcal{A}_k(a_0,b_0):=\{m^k: m \equiv a_0 \modu{b_0}\}$. It is at the end of Berndt, Malik, and Zaharescu's paper \cite{BMZ} that they pose the question of establishing an asymptotic formula for $p_{\mathcal{A}_f}(n)$. To this end, we will follow the implementation of the circle method presented in \cite{BMZ,G,V2}, with some key innovations. The first is a careful analysis of the Matsumoto--Weng zeta function and the application of a polylogarithm identity to extract the main terms of the asymptotic occurring in Theorem \ref{theorem1}. For this see Lemma \ref{1}. The second key innovation is a generalisation of the classical major arc estimate for Waring's problem, see Lemma \ref{sing}.

For other types of formulae for restricted partitions, we refer the reader to \cite{NS1} and \cite{NS2}. Interestingly, Vaughan has obtained an asymptotic formula for the number of partitions into primes \cite{V3}.

We now introduce some notation and preliminaries that will allow us to state Theorem \ref{theorem1}.
Let $d \geq 2$ and suppose
\begin{equation*}
f(y):=\sum_{j=0}^d a_j y^j \in \mathbb{Z}[y]
\end{equation*}
is fixed such that $(a_0,\ldots,a_d)=1$ and 
\begin{equation*}
f(y)-a_0=a_d y \prod_{j=1}^{d-1} (y+\alpha_j)
\end{equation*}
is such that $\alpha_j \in \mathbb{C} \setminus \mathbb{R}_{\leq -1}$. By convention, for $z \in \mathbb{C}$ we let $-\pi<\arg(z) \leq \pi$ and $\boldsymbol{\alpha}:=(\alpha_1,\ldots,\alpha_{d-1},0)$. Note that the greatest common divisor condition imposed above is important because it ensures there are no congruence obstructions to representing an integer $n$ with a partition whose parts are values of $f$.

Evaluations of the Matsumoto--Weng zeta function at integers and residues of its poles naturally appear in our asymptotic formulae for $p_{\mathcal{A}_f}(n)$. We will provide some brief background on this  function. Matsumoto and Weng \cite{MW} introduced the following $r$-tuple zeta function 
\begin{equation} \label{rtuple}
\zeta_r \big((s_1,\ldots,s_r);(\beta_1,\ldots,\beta_r) \big):=\sum_{n=1}^{\infty} \frac{1}{(n+\beta_1)^{s_1} \cdots (n+\beta_r)^{s_r}}
\end{equation}
where the $s_j \in \C$ are complex variables and $\beta_j \in \mathbb{C} \setminus \R_{\leq -1}$ for all $1 \leq j \leq r$. Here
\begin{equation*}
(n+\beta_j)^{s_j}=\exp(-s_j \log (n+\beta_j ))
\end{equation*}
where the branch of the logarithm is fixed as $-\pi< \arg(n+\beta_j) \leq \pi$. This series is clearly well defined and absolutely convergent in the region,
\begin{equation*}
\Re(s_1+\cdots+s_r)>1.
\end{equation*}
By means of the classical Mellin--Barnes integral formula \cite[Eqn. 4]{MW}, $\zeta_r(\cdot,\boldsymbol{\beta})$ has meromorphic continuation to $\mathbb{C}^r$ with respect to the variables $s_1,\ldots,s_r$ when $\boldsymbol{\beta}=(\beta_1,\ldots,\beta_{r-1},0)$. One can see \cite[Prop.~1]{MW} for more details. We will use the one--variable specialisation $s:=s_1=\cdots=s_r$ of \eqref{rtuple}, and its corresponding meromorphic continuation with respect to $s$ when $\beta_r=0$. This one--variable function will be denoted by $\zeta(\cdot,\boldsymbol{\beta})$. By \cite[Prop.~1]{MW}, the function $\zeta(\cdot,\boldsymbol{\beta})$ has a simple pole at $s=1/r$ with residue $1/r$, and at most simples poles at $s=(1-m)/r$ by \cite[Lem.~5]{MW} with $m \in \mathbb{N}$. Let $c_m$ denote the residue of $\zeta(\cdot,\boldsymbol{\beta})$ at such point. The $c_m$ are given by sums of products of multinomial coefficients with the roots $\beta_r$, and examples can be found on \cite[pp.~246--247]{MW}. In fact $\zeta(\cdot,\boldsymbol{\beta})$ is analytic all non-positive integers by \cite[Prop.~2]{MW}, and we have the explicit evaluation
\begin{equation} \label{evalz}
\zeta(0,\boldsymbol{\beta})=\zeta(0)-\frac{1}{r} \sum_{j=1}^{r-1} \beta_j
\end{equation}
where $\zeta(\cdot)$ denotes the Riemann zeta function. There is also an evaluation of $\zeta^{\prime}(0,\boldsymbol{\beta})$ in \cite[Theorem~E]{MW}.

Now we introduce some parameters depending on $n$ that occur in our asymptotic formulae. For each $n \in \mathbb{N}$ sufficiently large (depending only on $f$), let $X,Y \in \mathbb{R}$ be such that 
\begin{equation} \label{X}
n = X\bigg(\frac{1}{{{d^2}}}\zeta \Big( {\frac{{1 + d}}{d}} \Big) \Gamma \Big( {\frac{1}{d}} \Big) \Big(\frac{X}{a_d} \Big)^{1/d}+ \zeta (0,\boldsymbol{\alpha} )+\frac{W_l^{\prime}(0)}{2 \pi i X}  \bigg)
\end{equation}
where $W_l$ for $l=0,1$ (depending on whether $a_0=0$ or $a_0 \neq 0$ respectively) are computable constants defined in Lemma \ref{1} and
\begin{equation} \label{Y}
Y := \frac{{d + 1}}{{2{d^3}}} \Big(\frac{X}{a_d} \Big)^{1/d} \Gamma \Big( {\frac{1}{d}} \Big)\zeta \bigg( {\frac{1}{d} + 1} \Big) + \frac{1}{2}\zeta (0,\boldsymbol{\alpha} )-\frac{W_l^{\prime \prime}(0)}{8 \pi^2 X^2}.
\end{equation}
In the cases we consider, $W_l^{\prime}(0)$ is purely imaginary, so $X \in \mathbb{R}$ is well-defined. Here it is instructive to observe that $X \sim \mathcal{C}_1 n^{\frac{d}{d+1}}$ and $Y \sim \mathcal{C}_2 n^{\frac{1}{d+1}}$ as $n \rightarrow \infty$ for some computable constants $\mathcal{C}_1,\mathcal{C}_2>0$, depending only on $f$. Under certain hypotheses on $f$,  the implication of Theorem \ref{theorem1} will be
\begin{equation*}
p_{\mathcal{A}_f}(n) \sim \mathcal{C}_3 n^{-\frac{2d(1-\zeta(0,\boldsymbol{\alpha}))+1}{2(d+1)}}  \exp \big(\mathcal{C}_4 n^{\frac{1}{d+1}} \big)
\end{equation*}
for some computable constants $\mathcal{C}_3,\mathcal{C}_4>0$, depending only on $f$. Keeping \eqref{evalz} in mind, it is interesting how the roots $-\alpha_1,\ldots,-\alpha_{d-1}$ of $f-a_0$ affect the main terms of the asymptotics. To state our results more precisely let
\begin{equation*}
\mathcal{C}:=\frac{1}{d}  \zeta \left( \frac{1 + d}{d} \right) \Gamma \Big( \frac{1}{d} \Big) \Big(\frac{X}{a_d} \Big)^{1/d}+W(0)
\end{equation*}
where the appropriate $W$ is chosen depending on $a_0$.  

\begin{theorem} \label{theorem1}
Fix $f \in \mathbb{Z}[y]$ as above and suppose $\alpha_j \in \mathbb{R}_{\geq 0}$ for all $1 \leq j \leq d-1$ . Suppose
\begin{equation*}
a_0 \geq 0, \quad a_1=0, \quad \frac{a_{d-1}}{a_d}:=\sum_{j=1}^{d-1} \alpha_j < \frac{d}{2},  \quad \frac{a_0}{a_d}<1
\end{equation*}
and $f$ is non-constant as a function $\modu{p}$ for all primes $p \leq d$.  Let $n,X,Y \in \mathbb{R}$ be as above and $0<R<1$ fixed. Then, for any $1<J<dR$, there exist $w_1,\cdots,w_{J-1} \in \mathbb{R}$ (independent of $n$) such that  
\begin{equation*}
p_{\mathcal{A}_f}(n)=\frac{1}{2 \pi a_d^{\zeta(0,\boldsymbol{\alpha})}} \frac{\exp \Big(\mathcal{C}+\frac{n}{X} \Big)}{X^{1-\zeta(0,\boldsymbol{\alpha})} Y^{\frac{1}{2}}} \Bigg(\sqrt{\pi}+\sum_{q=1}^{J-1} w_q Y^{-q} +O_{f,R}(Y^{-J})  \Bigg)
\end{equation*}
as $n \rightarrow \infty$.
\end{theorem}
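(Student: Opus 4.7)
The plan is to apply the Hardy--Littlewood circle method to the generating function
\begin{equation*}
F(z)=\sum_{n=0}^{\infty} p_{\mathcal{A}_f}(n)z^n=\prod_{m=1}^{\infty}(1-z^{f(m)})^{-1}.
\end{equation*}
I would set the contour to the circle $|z|=e^{-1/X}$ with $X$ defined by \eqref{X}, write $z=e^{-1/X+2\pi i\theta}$, so that by Cauchy's theorem
\begin{equation*}
p_{\mathcal{A}_f}(n)=e^{n/X}\int_{-1/2}^{1/2}F(e^{-1/X+2\pi i\theta})e^{-2\pi in\theta}\,d\theta,
\end{equation*}
and then partition $[-1/2,1/2]$ into major arcs centred at rationals $a/q$ with $q$ bounded by a small power of $X$, and minor arcs comprising the rest.

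The first step is to obtain a sufficiently refined asymptotic for $\log F(e^{-s})$ as $s\to 0^{+}$, which governs both the saddle point location and the Gaussian width. Via Mellin inversion one has
\begin{equation*}
\log F(e^{-s})=\frac{1}{2\pi i}\int_{(c)}\Gamma(w)\zeta(w+1)s^{-w}\sum_{m=1}^{\infty}f(m)^{-w}\,dw.
\end{equation*}
The Dirichlet series $\sum_m f(m)^{-w}$ is where the Matsumoto--Weng zeta function enters: using the factorisation $f(m)-a_0=a_d m\prod_j(m+\alpha_j)$, I would expand $(1+a_0/(a_d m\prod_j(m+\alpha_j)))^{-w}$ binomially and match the leading piece with $a_d^{-w}\zeta(w,\boldsymbol{\alpha})$. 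Shifting the line of integration past the poles at $w=1/d$, $w=0$, and the negative poles dictated by the theorem's parameter $R$, and invoking the polylogarithm identity of Lemma \ref{1}, yields
\begin{equation*}
\log F(e^{-s})=\frac{1}{d}\zeta\Big(\frac{d+1}{d}\Big)\Gamma\Big(\frac{1}{d}\Big)\Big(\frac{1}{a_d s}\Big)^{1/d}+\zeta(0,\boldsymbol{\alpha})\log(1/s)-\zeta(0,\boldsymbol{\alpha})\log a_d+W_l(s)+O(s^{R}).
\end{equation*}
The conditions $a_1=0$, $a_{d-1}/a_d<d/2$, and $a_0/a_d<1$ are precisely what is needed to make the expansion converge in the stated range and to control the error holomorphically. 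The definitions \eqref{X} and \eqref{Y} are then chosen so that the derivative in $\theta$ of the exponent $-2\pi in\theta+\log F(e^{-1/X+2\pi i\theta})$ vanishes at $\theta=0$ and its second derivative is exactly $-8\pi^2 Y$, placing a saddle at the origin.

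On the principal arc near $\theta=0$ I would apply Laplace's method: substitute $\theta=u/(2\pi\sqrt{Y})$, expand the exponent beyond quadratic order, and integrate the resulting asymptotic series term by term. The Gaussian integration produces the factor $\sqrt{\pi}/\sqrt{Y}$, the contribution of $\log F$ at $s=1/X$ produces the prefactor $a_d^{-\zeta(0,\boldsymbol{\alpha})}X^{\zeta(0,\boldsymbol{\alpha})-1}\exp(\mathcal{C})$, and the odd moments of the Gaussian vanish, leaving the expansion $\sqrt{\pi}+\sum_{q=1}^{J-1}w_qY^{-q}+O(Y^{-J})$. For the non-principal major arcs at $a/q$ with $q\geq 2$, the standard strategy is to relate $F$ to exponential sums $S(q,a):=\sum_{m\leq q}e(af(m)/q)$ times a smoother factor; the hypothesis that $f$ is non-constant modulo every prime $p\leq d$ together with $(a_0,\ldots,a_d)=1$ drives a Weyl-type bound $|S(q,a)|\ll q^{1-1/d+\varepsilon}$, and the generalised major arc estimate of Lemma \ref{sing} shows that the cumulative contribution is dwarfed by the principal arc.

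The main obstacle will be the minor arc bound. There the tools are Weyl differencing and Vaughan-type iteration applied to partial exponential sums with polynomial phase $f$; one must show that $|F(e^{-1/X+2\pi i\theta})|$ saves a factor of at least $Y^{-J}$ for every fixed $J$ against the peak value at $\theta=0$, uniformly in $\theta$ on the minor arcs. Non-vanishing of the relevant Gauss-type sums is secured by the mod-$p$ hypothesis, and the range for Dirichlet approximation $|\theta-a/q|\leq 1/(qQ)$ is chosen compatibly with the major arc width. Assembling the main term from the principal arc, the negligible contribution from the other major arcs, and the minor arc bound then produces the asymptotic formula of Theorem \ref{theorem1}.
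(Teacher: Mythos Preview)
Your proposal is correct and follows essentially the same route as the paper: Cauchy's integral on the circle $|z|=e^{-1/X}$, the Mellin/Matsumoto--Weng/polylogarithm expansion of $\Phi_f$ (Lemma~\ref{1}), a Laplace--saddle analysis on the principal arc giving the $\sqrt{\pi}+\sum w_q Y^{-q}$ expansion, and Weyl-type input on the remaining arcs. One small clarification: on the auxiliary major arcs $q\ge 2$ the decisive estimate is not a power-saving Weyl bound (which is vacuous for bounded $q$) but the uniform inequality $|S(q,a,f)|\le C_f q$ with a constant $C_f<1$ for \emph{all} $q>1$ (Lemma~\ref{constant}); this is precisely where the hypothesis that $f$ is non-constant modulo every prime $p\le d$ is used, and it yields $|\Phi_f(\rho e(\Theta))|\le (1-\delta_f/2)\cdot d^{-1}a_d^{-1/d}\Gamma(1/d)\zeta(1+1/d)X^{1/d}$, which is what beats the principal-arc exponent.
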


Note that the hypotheses placed on $f \in \mathbb{Z}[y]$ in Theorem 1 guarantee that all coefficients of $f$ are non-negative.

In the case $f(x)=x^d$ with $d \geq 2$ fixed, Theorem \ref{theorem1} yields the same main terms occurring in \cite[Theorem~1]{G}.

\section{Auxiliary lemmas} \label{Auxiliary lemmas}
The generating series for $\{p_{\mathcal{A}_f}(n) \}$ is given by 
\begin{equation} \label{par}
\Psi_{f}(z):=\sum_{n=1}^{\infty} p_{\mathcal{A}_f}(n) z^n =  \prod_{n=1}^{\infty} \big(1-z^{f(n)} \big)^{-1}.
\end{equation}
It will be convenient in computations to define the logarithm of the above 
\begin{equation} \label{aux}
\Phi_f(z):=\sum_{j=1}^{\infty} \sum_{n=1}^{\infty} \frac{1}{j} z^{j f(n)}.
\end{equation}
For $X, \Theta \in \mathbb{R}$ with $X \geq 1$, the following quantities will be useful
\begin{equation*}
\rho:=\exp \big({-1/X} \big), \quad \Delta:=(1+4 \pi^2 X^2 \Theta^2)^{-1/2} \quad \text{and} \quad x:=\frac{1-2 \pi i \Theta X }{X}.
\end{equation*}

\subsection{Bound for Matsumoto--Weng zeta function}
In the special case we consider, the Matsumoto--Weng zeta function has a polynomial bound in  bounded vertical strips of the complex plane. This will be useful throughout the proof of Lemma \ref{1}.
\begin{lemma} \label{MWbound}
Suppose $\boldsymbol{\beta}=(\beta_1,\ldots,\beta_{d-1},0)$ where $\beta_j \in \mathbb{R}_{\geq 0}$ for $1 \leq j \leq d-1$.  Then for $s=\sigma+it$ with $|t|>1$ we have 
\begin{equation} \label{realMW}
|\zeta(s,\boldsymbol{\beta})| \ll_{\boldsymbol{\beta},\sigma} |t|^{O_{\boldsymbol{\beta},\sigma}(1)}. 
\end{equation}
\end{lemma}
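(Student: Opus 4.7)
The plan is to bootstrap from the region of absolute convergence $\Re(s) > 1/d$ to the rest of the plane using the Matsumoto--Weng Mellin--Barnes representation, shift contours past the relevant Gamma poles, and control everything via the convexity bound for Riemann/Hurwitz zeta together with Stirling's formula. Since only a qualitative polynomial bound (with unspecified exponent) is sought, crude estimates should suffice.

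First I would dispatch $\sigma := \Re(s) > 1/d$ by the trivial termwise bound
\begin{equation*}
|\zeta(s,\boldsymbol{\beta})| \leq \sum_{n=1}^{\infty} n^{-d\sigma} \prod_{j=1}^{d-1} (1 + \beta_j/n)^{-\sigma} = O_{\boldsymbol{\beta},\sigma}(1),
\end{equation*}
which is independent of $|t|$. For $\sigma \leq 1/d$ I would insert the classical identity $(1+x)^{-s} = \frac{1}{2\pi i}\int_{(c)} \frac{\Gamma(s+w)\Gamma(-w)}{\Gamma(s)} x^w\,dw$, valid for $-\sigma < c < 0$ and $0 < x < 1$, into each of the $d-1$ factors $(1+\beta_j/n)^{-s}$ on the tail $n > N_0 := \lceil \max_j \beta_j \rceil + 1$, exactly as in \cite[Eqn.~4]{MW}. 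Interchanging summation and integration yields
\begin{equation*}
\zeta(s,\boldsymbol{\beta}) = P(s) + \frac{1}{(2\pi i)^{d-1}} \int_{(c_1)}\!\cdots\!\int_{(c_{d-1})} \prod_{j=1}^{d-1} \frac{\Gamma(s+w_j)\Gamma(-w_j)}{\Gamma(s)}\,\beta_j^{w_j}\,\zeta\Bigl(ds+\textstyle\sum_j w_j,\,N_0\Bigr)\,dw_1\cdots dw_{d-1},
\end{equation*}
where $P(s)$ is the finite explicit contribution from $n \leq N_0$, holomorphic in $s$ and of size $O_{\boldsymbol{\beta},\sigma}(1)$, and $\zeta(\cdot,N_0)$ denotes the Hurwitz zeta function.

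Next I would extend this to $\sigma \leq 1/d$ by shifting each $w_j$-contour leftward across the simple poles of $\Gamma(s+w_j)$ at $w_j \in \{-s, -s-1, \ldots\}$ until a fixed line $\Re(w_j) = -M(\sigma)$ is reached. Each residue contributes a term of the shape $(\text{polynomial in } s)\cdot \beta_j^{-s-k_j} \cdot \zeta(\text{linear in } s, N_0)$, which by the convexity bound for Hurwitz zeta in fixed vertical strips is $\ll_{\boldsymbol{\beta},\sigma} (1+|t|)^{O_{\boldsymbol{\beta},\sigma}(1)}$. On the shifted multiple contour, Stirling yields
\begin{equation*}
\Bigl|\frac{\Gamma(s+w_j)\Gamma(-w_j)}{\Gamma(s)}\Bigr| \ll_{\sigma,M} (1+|t|)^{O(1)}(1+|\Im(w_j)|)^{O(1)}\,e^{-\pi|\Im(w_j)|/2},
\end{equation*}
so the residual $(d-1)$-fold integral converges absolutely in each $w_j$, and the extra zeta factor contributes at most $(1+|t|+\sum_j |\Im(w_j)|)^{O(1)}$ by convexity, giving the required bound.

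The main obstacle I anticipate is choosing the final contours $c_j$ carefully enough that the Hurwitz zeta factor $\zeta(ds+\sum_j w_j, N_0)$ stays in a half-plane where convexity gives only polynomial growth in $|t| + \sum_j|\Im(w_j)|$, while simultaneously accounting for all the Gamma poles crossed in the shift. This is a purely bookkeeping issue rather than a conceptual one: the exponential decay $e^{-\pi|\Im(w_j)|/2}$ in the Gamma ratio leaves ample room to absorb any polynomial growth of the zeta factor, and since the exponent in $|t|$ need not be tracked, the argument closes cleanly.
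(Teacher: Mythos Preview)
Your approach is correct in spirit and would work, but it is substantially more laborious than the paper's route. The paper dispatches the lemma in three lines: it observes that any vanishing $\beta_j$ can be absorbed into the last slot via
\[
\zeta(s,\boldsymbol{\beta})=\zeta_{l+1}\bigl(s,\ldots,s,(d-l)s;\,(\beta_1,\ldots,\beta_l,0)\bigr),
\]
where $\beta_1,\ldots,\beta_l$ are the nonzero entries, and then simply invokes \cite[Proposition~1(iii)]{MW}, which already asserts the polynomial vertical-strip bound for the multivariable Matsumoto--Weng zeta function. In other words, the paper outsources the entire Mellin--Barnes computation to \cite{MW}; what you have sketched is essentially a reconstruction of how Matsumoto and Weng themselves prove that proposition.

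Two small points of care if you carry your version through. First, your formula applies the Mellin--Barnes kernel to ``each of the $d-1$ factors'', but when some $\beta_j=0$ the factor $(1+\beta_j/n)^{-s}$ is identically $1$ and the expression $\beta_j^{w_j}$ is ill-defined; you should only unfold the nonzero $\beta_j$, which is precisely the reduction the paper makes explicit. Second, your Stirling estimate for $\Gamma(s+w_j)\Gamma(-w_j)/\Gamma(s)$ with uniform decay $e^{-\pi|\Im w_j|/2}$ is not quite right: when $\Im w_j$ and $t$ have opposite signs and $|\Im w_j|\le |t|$ the exponential factor is only $O(1)$. This does not break the argument, since on that range the $w_j$-integral has length $\ll |t|$ and the integrand is polynomially bounded, so the contribution is still $\ll |t|^{O(1)}$; but the bookkeeping you flag as the ``main obstacle'' is genuinely where the work lies.
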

\begin{proof}
Without loss of generality we may assume there is at least one $1 \leq j \leq d-1$ such that $\beta_j \neq 0$. Otherwise $\zeta(s,\boldsymbol{\beta})=\zeta(ds)$ and we have $\zeta(ds) \ll |t|^{O_d(1)}$. We may permute the $\beta_j$ appropriately to suppose $1 \leq l \leq d-1$ is such that  $\beta_j \neq 0$ for $1 \leq j \leq l$ and $\beta_j=0$ otherwise. Thus we can make the reduction   
\begin{equation} \label{conve}
\zeta \big(s,\boldsymbol{\beta} \big)=\zeta_{l+1} \big(s,\ldots,s,(d-l)s; (\beta_1,\ldots,\beta_l,0) \big).
\end{equation}  
The result follows from now applying \cite[Proposition 1(iii)]{MW} to the right hand side of \eqref{conve}.
\end{proof}

\subsection{Asymptotics for $\Phi_f$}
To obtain asymptotics for $\Phi_f$, our strategy is to apply the Cahen--Mellin tranform \cite[Entry 3.1]{ober} to the summand of $\Phi_f$. We then employ a polylogarithm identity to simplify the resulting contour integral. The rest of the proof of Lemma \ref{1} will largely proceed via residue calculus and controlling the error when we shift lines of integration. The $W_0$ and $W_1$ terms come from the residues of $\zeta(s,\boldsymbol{\alpha})$ at various poles. In what follows let $e(x):=\exp(2 \pi i x)$.

\begin{lemma} \label{1}
Let $f \in \mathbb{Z}[y]$ be as in Theorem \ref{theorem1} and $R,\varepsilon>0$ fixed. Let $0<L<1$ be sufficiently small and fixed depending on $f,R$ and $\varepsilon$. Then for all $X>0$ sufficiently large and $\Theta\in \mathbb{R}$ such that $| \Theta| \leq  X^{\frac{L}{d}-1}$, we have
\begin{multline} \label{auxphi}
\Phi_f \big( \rho e (\Theta) \big)=\frac{a_d^{-1/d}}{d} \Gamma \Big(\frac{1}{d} \Big) \zeta \Big( 1+\frac{1}{d} \Big) \Big(\frac{X}{1-2\pi i \Theta X} \Big)^{1/d} \\
 + \zeta(0,\boldsymbol{\alpha}) \log \Big( \frac{X}{a_d (1-2 \pi i \Theta X)} \Big)+  W_1(\Theta)+O_{f,R,\varepsilon} \big( X^{-R+\varepsilon} \big) \quad \text{if} \quad a_0 \neq 0
\end{multline}
where
\begin{multline} \label{W1}
W_1(\Theta)=\frac{a_d^{-1/d}}{d} x^{-1/d} \Gamma \Big( \frac{1}{d} \Big)  \sum_{k=1}^{\infty} \frac{(-1)^k \zeta(1/d+1-k)}{k!} a_0^{k}  x^{k} \\
 +\zeta^{\prime}(0,\boldsymbol{\alpha}) +\sum_{m=1}^{\infty } \frac{(-1)^{m}}{m}  \big( \frac{a_0}{a_d} \big)^m \zeta(m,\boldsymbol{\alpha}) \\
+\sum_{k=0}^{\infty}  \sum_{\substack{m=1 \\ m \not \equiv 1 \modu{d}}}^{dR}  c_m \frac{(-1)^k \zeta(\frac{1-m}{d}+1-k)}{k!} \Gamma \Big(\frac{1-m}{d} \Big) a_0^k a_d^{-\frac{1-m}{d}} x^{k-\frac{1-m}{d}} \\
+\sum_{k=0}^{\infty}  \sum_{m=1}^{R} \frac{(-1)^{k+m} \zeta(1-m-k)}{m! k!} a_0^k a_d^{m} x^{k+m} \zeta(-m,\boldsymbol{\alpha}). 
\end{multline}
and $\rho=\exp \big( {-1/X} \big)$. Further,
\begin{multline}  \label{auxphi2}
\Phi_f \big( \rho e (\Theta) \big)=\frac{a_d^{-1/d}}{d} \Gamma \Big(\frac{1}{d} \Big) \zeta \Big( 1+\frac{1}{d} \Big) \Big(\frac{X}{1-2\pi i \Theta X} \Big)^{1/d} \\
 + \zeta(0,\boldsymbol{\alpha}) \log \Big( \frac{X}{a_d (1-2 \pi i \Theta X)} \Big)+  W_0(\Theta)+O_{f,R,\varepsilon} \big( X^{-R+\varepsilon} \big) \quad \text{if} \quad a_0=0, 
\end{multline}
where
\begin{multline} \label{W2}
W_0(\Theta)=\zeta^{\prime}(0,\boldsymbol{\alpha}) +\sum_{\substack{m=1 \\ m \not \equiv 1 \modu{d}}}^{dR}  c_m \zeta(\frac{1-m}{d}) \Gamma \Big(\frac{1-m}{d} \Big) a_d^{-\frac{1-m}{d}} x^{-\frac{1-m}{d}} \\
+\sum_{m=1}^{R} \frac{(-1)^{m} \zeta(1-m)}{m!} a_d^{m} x^{m} \zeta(-m,\boldsymbol{\alpha}). 
\end{multline}
\end{lemma}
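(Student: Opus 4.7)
\textbf{Proof plan for Lemma~\ref{1}.}

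The plan is to begin with
$$\Phi_f(\rho e(\Theta)) = \sum_{j=1}^{\infty}\sum_{n=1}^{\infty} \frac{1}{j}\, e^{-jf(n)x}, \qquad x = \frac{1 - 2\pi i\Theta X}{X},$$
and apply the Cahen--Mellin transform $e^{-u} = \frac{1}{2\pi i}\int_{(c)}\Gamma(s)u^{-s}\,ds$ to each exponential on a vertical line $\Re s = c$ with $c > R+1$. The hypothesis $|\Theta|\le X^{L/d-1}$ forces $\Re x = 1/X > 0$ (so Cahen--Mellin applies) and also $|x| \ll X^{L/d-1}$, which will be needed later to bound the shifted integral. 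Interchanging sum and integral (justified by exponential decay of $\Gamma$ on the line), the $j$-sum collapses to $\zeta(s+1)$. When $a_0 = 0$, the $n$-sum is $a_d^{-s}\zeta(s,\boldsymbol{\alpha})$, so $\Phi_f = \frac{1}{2\pi i}\int_{(c)}\Gamma(s)\zeta(s+1)\zeta(s,\boldsymbol{\alpha})(a_dx)^{-s}\,ds$. When $a_0\ne 0$, I would first expand termwise in $n$,
$$f(n)^{-s} = a_d^{-s}\Bigl(n\textstyle\prod_{j=1}^{d-1}(n+\alpha_j)\Bigr)^{-s}\sum_{k=0}^{\infty}\binom{-s}{k}\Bigl(\tfrac{a_0/a_d}{n\prod_{j}(n+\alpha_j)}\Bigr)^{k},$$
whose binomial series converges for every $n\ge 1$ by $a_0/a_d<1$, and sum over $n$ to identify $\sum_n f(n)^{-s} = a_d^{-s}\sum_{k\ge 0}\binom{-s}{k}(a_0/a_d)^k\zeta(s+k,\boldsymbol{\alpha})$. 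Thus the integrand becomes a $k$-sum of $\Gamma(s)\zeta(s+1)\binom{-s}{k}(a_0/a_d)^k(a_dx)^{-s}\zeta(s+k,\boldsymbol{\alpha})$.

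Second, I would shift the contour to $\Re s = -R-\varepsilon$ and harvest residues. The simple poles of $\zeta(\cdot,\boldsymbol{\alpha})$ at $s=(1-m)/d$ yield the leading term (at $m=0$, $s=1/d$) and the $c_m$-sums in $W_0$; for the $k$-th integrand the same poles occur at $s=(1-m)/d - k$ and generate the first and fourth lines of $W_1$. The double pole of $\Gamma(s)\zeta(s+1)$ at $s=0$ (present only for the $k=0$ summand, since $\binom{-s}{k}$ has a simple zero at $s=0$ for $k\ge 1$) produces the pair $\zeta(0,\boldsymbol{\alpha})\log(X/(a_d(1-2\pi i\Theta X))) + \zeta'(0,\boldsymbol{\alpha})$ by combining the Laurent expansion $\Gamma(s)\zeta(s+1) = 1/s^2 + O(1)$ with the Taylor expansion of $(a_dx)^{-s}\zeta(s,\boldsymbol{\alpha})$ around $s=0$. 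For $k\ge 1$ the pole at $s=0$ collapses to a simple pole, contributing $\frac{(-1)^k}{k}(a_0/a_d)^k\zeta(k,\boldsymbol{\alpha})$ (the second sum in $W_1$). Finally, the poles of $\Gamma(s)$ at $s=-m$ produce the $\zeta(-m,\boldsymbol{\alpha})$-sums. The restriction $m\not\equiv 1\pmod d$ reflects the fact that $\zeta(\cdot,\boldsymbol{\alpha})$ is in fact analytic at non-positive integers, so the apparent poles at $s=(1-m)/d$ disappear precisely when $(1-m)/d\in\Z_{\le 0}$. The polylogarithm identity
$$\Li_s(e^{-u}) = \Gamma(1-s)u^{s-1} + \sum_{k\ge 0}\frac{\zeta(s-k)}{k!}(-u)^k,\qquad |u|<2\pi,\ s\notin\Z_{\ge 1},$$
is then invoked with $u=a_0x$ (so $|u|\ll X^{L/d-1}\to 0$) and $s$ a shift of the relevant pole location, in order to recognise the resulting double series in $(k,m)$ as the polylogarithmic expressions displayed in $W_1$.

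Third, to bound the residual integral on $\Re s = -R-\varepsilon$ I would combine Stirling for $\Gamma(s)$ (exponential decay $e^{-\pi|\Im s|/2}$), the convexity bound for $\zeta(s+1)$, and Lemma~\ref{MWbound} for $\zeta(s+k,\boldsymbol{\alpha})$; the factor $(a_dx)^{-s}$ contributes only $e^{(\pi/2-\delta)|\Im s|}|x|^{R+\varepsilon}$, since $\Re x>0$ keeps $\arg(a_dx)$ strictly inside $(-\pi/2,\pi/2)$. Combined with $|x|^{R+\varepsilon}\ll X^{(L/d-1)(R+\varepsilon)}$, these bounds give an error $O_{f,R,\varepsilon}(X^{-R+\varepsilon})$ once $L$ is chosen small enough in terms of $f,R,\varepsilon$. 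The principal obstacle will be uniformity in $k$ in the $a_0\ne 0$ case: one must truncate the binomial expansion at some $k=O(R)$, uniformly bound $\binom{-s}{k}$ on the shifted contour so the $k$-tail does not spoil the error, and carefully track overlapping poles of $\Gamma$ and $\zeta(s+k,\boldsymbol{\alpha})$—coincidences occurring precisely at the $m\equiv 1\pmod d$ points excluded from the sums—to ensure that no genuine double pole is overlooked outside the single one handled explicitly at $s=0$.
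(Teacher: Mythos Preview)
Your plan and the paper's differ at the very first substantive step. The paper does \emph{not} apply the Cahen--Mellin transform to $e^{-jf(n)x}$; it first writes $f(n)=a_0+(f(n)-a_0)$, pulls the factor $e^{-ja_0x}$ outside, and applies Cahen--Mellin only to $e^{-j(f(n)-a_0)x}$. Since $f(n)-a_0=a_d\, n\prod_l(n+\alpha_l)$, the $n$-sum gives $\zeta(s,\boldsymbol\alpha)$ immediately, while the $j$-sum becomes $\sum_j j^{-s-1}e^{-ja_0x}=\Li_{s+1}(e^{-a_0x})$. The polylogarithm identity is then applied to this polylog \emph{inside the integrand}, splitting the contour integral into $I_1$ (coming from the $\Gamma(-s)(a_0x)^s$ term, and shifted to the \emph{right}) and $I_2$ (from the series $\sum_k\frac{\zeta(s+1-k)}{k!}(-a_0x)^k$, shifted left). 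The displayed $W_1$ is literally the sum of residues of these two pieces, which is why its shape involves $\zeta(s+1-k)$, $\Gamma(1/d)$, etc.

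Your route---keeping $a_0$ inside $f(n)^{-s}$ and binomial-expanding---can in principle be reconciled with the stated $W_1$ via the identity $\Gamma(z-k)\binom{k-z}{k}=\frac{(-1)^k}{k!}\Gamma(z)$ (check your $s=1/d-k$ residue against the $k$-th term on the first line of $W_1$ to see this). But your appeal to the polylogarithm identity to ``recognise'' the expressions afterwards is misplaced: that identity is the paper's tool for expanding the integrand, not a device for reorganising already-computed residues.

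More seriously, your decomposition loses the damping that makes the error estimate work. In the paper's $I_2$ the $k$-th summand carries an explicit factor $(a_0x)^k$, and since $|x|\ll X^{L/d-1}$ the $k$-series converges geometrically on the shifted line $\Re s=-R$ even though $\Delta\sim X^{-L/d}$: the paper's bounds show the effective ratio is $\sim|a_0x|X^{L/d}\ll X^{2L/d-1}$. Your $k$-th summand instead carries $(a_0/a_d)^k\binom{-s}{k}$; after integrating in $t$ on $\Re s=-R$ the binomial coefficient contributes $\sim\Delta^{-k}\sim X^{kL/d}$ with no compensating $|x|^k$, so the $k$-series diverges for large $X$. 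Truncating at $k=O(R)$, as you propose, controls the kept terms (for $L$ small enough) but leaves a tail that on the starting line $\Re s=c$ is of size $\sim |x|^{-c}\sim X^{c(1-L/d)}$, not $O(X^{-R+\varepsilon})$. You would need a separate uniform-in-$s$ integral remainder for $(1+u)^{-s}$ to close this gap---precisely the obstruction the paper sidesteps by factoring out $e^{-ja_0x}$ before Mellin.
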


\begin{remark}
In this proof, the implied constants occuring in estimates used to show uniform convergence of a sum of funtions or an integrand are allowed to depend on a fixed $X$. Bounds for errors coming from integrals used to obtain the error terms in \eqref{auxphi} and \eqref{auxphi2} are uniform in $X$. Note that \eqref{pred2} and \eqref{stirl} are repeatedly used in the proof of Lemma \ref{1}.
\end{remark}

\begin{proof}
First consider the case $a_0>0$. Recall
\begin{equation*}
\Phi_f(\rho e(\Theta) )= \sum_{j=1}^{\infty} \sum_{n=1}^{\infty} \frac{1}{j} \exp \big({- j f(n) x }\big).
\end{equation*}
This can be re-written as
\begin{equation*}
\Phi_f (\rho e(\Theta))= \sum_{j=1}^{\infty} \frac{\exp ({- j a_0 x} )}{j}  \sum_{n=1}^{\infty} \exp \big({-j(f(n)-a_0) x} \big).
\end{equation*}
Using the Cahen--Mellin transform \cite[Entry 3.1]{ober} the above becomes
\begin{align}
\Phi_f \big(\rho e(\Theta) \big)&=  \frac{1}{2 \pi i} \sum_{j=1}^{\infty} \frac{\exp ({- j a_0 x} )}{j} \sum_{n=1}^{\infty} \int_{(c)}  \Gamma(s) \big(j (f(n)-a_0) x \big)^{-s} ds \nonumber \\
&=\frac{1}{2 \pi i} \sum_{j=1}^{\infty} \exp (- j a_0 x )   \sum_{n=1}^{\infty} \int_{(c)} j^{-s-1}  \Gamma(s) x^{-s} a_d^{-s} n^{-s} \prod_{l=1}^{d-1} (n+\alpha_l)^{-s} ds \label{swapint},
\end{align}
for any $c>0$. Here we choose $c:=1/d+\varepsilon$ for any $\varepsilon>0$. By an argument in \cite{V2} we know that
\begin{equation} \label{pred2}
\Big | \Big(\frac{X}{1-2 \pi i \Theta X} \Big)^{s} \Big | \leq (X \Delta )^{\sigma}  \exp \Big( { |t| \Big( {\frac{\pi }{2} - \Delta } \Big)} \Big).
\end{equation}
We also have Stirling's bound
\begin{equation} \label{stirl}
|\Gamma(s) | \ll |s|^{\sigma-\frac{1}{2}} \exp \Big( {-\frac{\pi}{2} |t| } \Big) \quad \text{where} \quad s=\sigma+it.
\end{equation}
For a fixed $X$, each integrand in \eqref{swapint} is bounded above by
\begin{equation*}
\ll_f |c+it|^{c-\frac{1}{2}} j^{-c-1}  \exp \big({-\Delta |t|} \big) n^{-c} \prod_{l=1}^{d-1} |n+\alpha_l|^{-c}  \ll_{f,X} n^{-c} \prod_{l=1}^{d-1} |n+\alpha_l|^{-c} .
\end{equation*}
Uniform convergence ensures we may interchange the summation over $n$ and integration in \eqref{swapint} to obtain
\begin{equation*}
\Phi_f \big(\rho e(\Theta) \big)=\frac{1}{2 \pi i} \sum_{j=1}^{\infty} \int_{(c)} \frac{\exp \big({-j a_0 x} \big)}{j^{s+1}} \Gamma(s) x^{-s} a_d^{-s} \zeta(s,\boldsymbol{\alpha}) ds.
\end{equation*}
By a similar uniform convergence argument using Lemma \ref{MWbound} we may interchange the summation over $j$ and integration to obtain 
\begin{equation} \label{keyint}
\Phi_f \big(\rho e(\Theta) \big)=\frac{1}{2 \pi i} \int_{(c)}  \Li_{s+1}(e^{-a_0 x}) \Gamma(s) x^{-s} a_d^{-s} \zeta(s,\boldsymbol{\alpha}) ds,
\end{equation}
where $\operatorname{Li}_s(z)$ is the polylogarithm function \cite{R}
\begin{equation*}
\operatorname{Li}_s(z):=\sum_{k=1}^{\infty} \frac{z^k}{k^s} \quad \text{for} \quad s \in \mathbb{C} \quad \text{and} \quad |z|<1.
\end{equation*}

We now employ the polylogarithm identity \cite[pg. 1050]{R}
\begin{equation} \label{polylog}
\Li_s(e^{\mu})=\Gamma(1-s) (-\mu)^{s-1}+\sum_{k=0}^{\infty} \frac{\zeta(s-k)}{k!} \mu^k,
\end{equation}
valid for all $|\mu|<2 \pi$ and $s \neq 1,2,3,\ldots$. Since $|\Theta| \leq X^{\frac{L}{d}-1}$, we have  $|a_0 x|<2 \pi$ for all $X$ sufficiently large. Also $s+1$ is never a positive integer on the line of integration $(c)$, so using \eqref{polylog} in \eqref{keyint} we have 
\begin{equation} \label{key}
\Phi_f \big(\rho e(\Theta) \big)=\frac{1}{2 \pi i} ( I_1+I_2)
\end{equation}
where
\begin{align}
I_1&:=\int_{(c)}  \Gamma(s) \Gamma(-s) \big( \frac{a_0}{a_d} \big)^s \zeta(s,\boldsymbol{\alpha}) ds \nonumber \\
I_2&:=\int_{(c)} \sum_{k=0}^{\infty} \frac{(-1)^k \zeta(s+1-k)}{k!} \Gamma(s) a_0^k a_d^{-s} x^{k-s} \zeta(s,\boldsymbol{\alpha}) ds \label{I2}.
\end{align}
\subsubsection{Integral $I_1$} Here is where we use the hypothesis that $|a_0/a_d|<1$.  For $P>1>c$ fixed, we extend the contour to the rectangle $\mathcal{H}_{P,T}$ with vertices $[P-iT,P+iT,c-iT,c+iT]$, oriented clockwise.  For all $s \in \mathcal{H}_{P,T}$ note that $\zeta(s,\boldsymbol{\alpha})= O(1)$ uniformly in $P$. Using \eqref{stirl} we see that the integral on the horizontal sides of $\mathcal{H}_{P,T}$ are bounded above by 
\begin{equation*}
\ll P |a_0/a_d|^{c}  |c +iT   |^{-1} \exp ({-\pi |T|})  \rightarrow 0 \quad \text{as} \quad T \rightarrow \infty.
\end{equation*}
On the vertical line $\Re(s)=P$ the integral is bounded above by 
\begin{equation*}
 |a_0/a_d|^P \int_{0}^{\infty} |P+it|^{-1} \exp ({-\pi |t|})  dt \ll_f P^{-1} |a_0/a_d|^P.
\end{equation*}
Inside of $\mathcal{H}_{P,T}$ the integrand has poles at $s=1,2,\ldots, \lfloor P \rfloor$. By Cauchy's residue theorem (keeping in mind the orientation)
\begin{equation*}
I_1=\sum_{m=1}^{\lfloor P \rfloor } \frac{(-1)^{m}}{m}  \big( \frac{a_0}{a_d} \big)^m \zeta(m,\boldsymbol{\alpha})+O_f \big(P^{-1} |a_0/a_d|^P \big).
\end{equation*}
Now allowing $P \rightarrow \infty$ we obtain
\begin{equation*}
I_1=\sum_{m=1}^{\infty } \frac{(-1)^{m}}{m}  \big( \frac{a_0}{a_d} \big)^m \zeta(m,\boldsymbol{\alpha}).
\end{equation*}

\subsubsection{Integral $I_2$}  In order to interchange the integration and the summation over $k$ in \eqref{I2} we need to show uniform convergence of the sum on the domain of integration. By the asymmetrical version of functional equation for $\zeta$, \eqref{stirl} and the fact that $\zeta(k-s)=O(1)$ uniformly with respect to $k$ for all $k \geq 2$ and $s \in (c)$. Thus the following holds for $k \geq 2$ and $s \in (c)$:
\begin{align} \label{examp}
\frac{| \zeta(s+1-k) | |a_0 x|^k }{k!} &= \frac{|a_0 x|^k}{k!} \Big | 2^{s+1-k} \pi^{s-k} \sin \Big(\frac{\pi(s+1-k)}{2} \Big) \Gamma(k-s) \zeta(k-s)  \Big | \nonumber \\
& \ll_f \frac{|a_0 x|^k}{k!} (2 \pi)^{-k} \exp \Big( \frac{\pi}{2} |t| \Big) |k-s|^{k-c-1/2} \exp \Big({-\frac{\pi}{2} |t|} \Big) \nonumber \\
& \ll _f\frac{|a_0 x|^k}{k!} (2 \pi)^{-k} |k-s|^{k} \nonumber \\
& \ll_f \Big( \frac{e |a_0 x|}{2\pi} \Big)^k  2^k \Big(1+\Big(\frac{|t|}{k} \Big)^k \Big) \\
& \ll_f \Big( \frac{e |a_0 x|}{\pi} \Big)^k  \Big(1+\Big(\frac{|t|}{k} \Big)^k \Big).
\end{align} 
The last lines follow by the triangle inequality and the asymptotic
\begin{equation*}
k! \sim \sqrt{2 \pi k} \Big( \frac{k}{e} \Big)^k \quad \text{as} \quad k \rightarrow \infty.
\end{equation*}
Applying \eqref{pred2}, Lemma \ref{MWbound} and \eqref{stirl}, the following holds for $|t| \geq e$
\begin{equation*} \label{ub}
|\Gamma(s) x^{-s} a_d^{-s} \zeta(s,\boldsymbol{\alpha}) |  \ll_f (X \Delta)^{c} |c+it|^{c-\frac{1}{2}} |t|^{O_{\boldsymbol{\alpha}}(1)} \exp \big( {-\Delta |t|} \big). 
\end{equation*}
Thus for $|t|>e$ each summand inside the integrand of $I_2$ is bounded by 
\begin{equation} \label{habd}
 \ll_{f,X}  \Big( \frac{e |a_0 x|}{\pi} \Big)^k |t|^{O_{\boldsymbol{\alpha}}(1)} \exp \big({-\Delta |t|} \big) \bigg(1+\frac{|t|^{k}}{k^k}  \bigg). 
\end{equation}
Set $\tilde{t}:=X^{-\frac{L}{d}} t$. Then the right side \eqref{habd} is bounded above
\begin{align}
& \ll_{f,X} \Big( \frac{e |a_0 x| X^{\frac{L}{d}} }{\pi} \Big)^k |t|^{O_{\boldsymbol{\alpha}}(1)} \exp \big({-\Delta X^{\frac{L}{d}} |\tilde{t}|} \big) \bigg(1+\frac{|\tilde{t}|^{k}}{k^k}  \bigg) \nonumber \\
& \ll_{f,X} \Big( \frac{e |a_0 x| X^{\frac{L}{d}} }{\pi} \Big)^k  \exp \Bigg({-\Delta X^{\frac{L}{d}} |\tilde{t}|+ \log \Big(1+\frac{|\tilde{t}|^{k}}{k^k} \Big) +O_{\boldsymbol{\alpha}} \big(\log |\tilde{t}| \big)} \Bigg) \label{auxbf}.
\end{align}
Since $|\Theta| \leq X^{\frac{L}{d}-1}$ we see that $\Delta X^{\frac{L}{d}}$ is bounded below by some absolute positive constant. Thus for all $|\tilde{t}|>1$,
\begin{equation} \label{analo}
-\Delta X^{\frac{L}{d}} |\tilde{t}|+\log \Big(1+\Big( \frac{|\tilde{t}|}{k} \Big)^k \Big)+O_{\boldsymbol{\alpha}} \big (\log |\tilde{t}| \big) \leq C_{\boldsymbol{\alpha}} k,
\end{equation}
for some absolute constant $C_{\boldsymbol{\alpha}}>0$. Thus \eqref{auxbf} is bounded above by 
\begin{equation} \label{inifI2}
\ll_{f,X} \Big( \frac{e |a_0 x| X^{\frac{L}{d}} C_{\boldsymbol{\alpha}} }{\pi} \Big)^k.
\end{equation}

For all $s=c+it$, with $|t| \leq X^{\frac{L}{d}}$, we have $|\zeta(s,\boldsymbol{\alpha})|=O_{\boldsymbol{\alpha}}(1)$ uniformly in $t$ by the Extreme Value Theorem and the fact that $\zeta(c+it,\alpha)$ is absolutely convergent. So the left hand side of \eqref{ub} is bounded above by 
\begin{equation*}
\ll_{f,X} |c+it|^{c-\frac{1}{2}} \exp \big({-\Delta |t|} \big) \ll_{f,X} 1.
\end{equation*}
Thus each summand inside the integrand of $I_2$ for $|t| \leq X^{\frac{L}{d}}$ is bounded by
\begin{align} 
& \ll_{f,X} \Big( \frac{e |a_0 x|}{ \pi} \Big)^k \exp \Bigg(  \log \Big(1+\Big(\frac{X^{\frac{L}{d}}}{k} \Big)^k \Big) \Bigg) \nonumber \\
& \ll_{f,X} \Big( \frac{e |a_0 x| X^{\frac{L}{d}}}{\pi} \Big)^k \label{unifI22}.
\end{align}

Since $|\Theta| \leq X^{\frac{L}{d}-1}$, 
\begin{equation*}
|x|X^{\frac{L}{d}} \leq \Big(\frac{1}{X}+2 \pi X^{\frac{L}{d}-1} \Big) X^{\frac{L}{d}} \leq 2 \pi \Big(X^{\frac{L}{d}-1}+X^{\frac{2L}{d}-1}  \Big) 
\end{equation*}
for all $d \geq 2$. Thus $|x|X^{\frac{L}{d}}$ is small for all sufficiently large fixed $X$.  Using this observation in \eqref{inifI2} and \eqref{unifI22} we conclude that each summand in \eqref{I2} is bounded above by
\begin{equation*}
\ll_{f,X} \Big(\frac{1}{2} \Big)^k
\end{equation*}
for all sufficiently large fixed $X$. Thus the sum over $k$ in \eqref{I2} converges uniformly on (c) and we can interchange the integration and summation in $I_2$. 

After performing the interchange of integration and summation in $I_2$, we extend the contour in each summand to the rectangle $\mathcal{M}_{T,R}$ with vertices $[-R-iT, -R+iT, c+iT,c-iT]$ for $R>0$ fixed, oriented anti-clockwise. We arrive at
\begin{equation}  \label{swap}
I_3:= \sum_{k=0}^{\infty} \int_{\mathcal{M}_{T,R}} \frac{(-1)^k \zeta(s+1-k)}{k!} \Gamma(s) a_0^k a_d^{-s} x^{k-s} \zeta(s,\boldsymbol{\alpha}) ds.
\end{equation}

All summands have a simple pole at $s=1/d$ inside $\mathcal{M}_{T,R}$, with total residue 
\begin{equation} \label{res1}
\frac{a_d^{-1/d}}{d} x^{-1/d} \Gamma \Big( \frac{1}{d} \Big)  \sum_{k=0}^{\infty} \frac{(-1)^k \zeta(1/d+1-k)}{k!} a_0^{k}  x^{k},
\end{equation}
since the residue of $\zeta(s,\boldsymbol{\alpha})$ at $s=1/d$ is $1/d$.

The $k=0$ summand in \eqref{swap} has a double pole at $s=0$ with residue
\begin{equation} \label{res2}
\zeta(0,\boldsymbol{\alpha}) \log \Big( \frac{X}{a_d(1-2 \pi i \Theta X)} \Big)+\zeta^{\prime}(0,\boldsymbol{\alpha}).
\end{equation}

All summands for $k \geq 1$ in \eqref{swap} have a simple pole at $s=0$ with total residue
\begin{equation} \label{res3}
\zeta(0,\boldsymbol{\alpha}) \sum_{k=1}^{\infty} \frac{(-1)^k \zeta(1-k)}{k!} a_0^k x^k.  
\end{equation}

Each summand has a potential simple pole at $s=(1-m)/d$ for $m \in \mathbb{N}$ with $m \not \equiv 1 \modu{d}$, with total residue 
\begin{equation} \label{res4}
\sum_{k=0}^{\infty}  \sum_{\substack{m=1 \\ m \not \equiv 1 \modu{d}}}^{dR}  c_m \frac{(-1)^k \zeta(\frac{1-m}{d}+1-k)}{k!} \Gamma \Big(\frac{1-m}{d} \Big) a_0^k a_d^{-\frac{1-m}{d}} x^{k-\frac{1-m}{d}}
\end{equation}
where $c_m$ is the residue of $\zeta(s,\boldsymbol{\alpha})$ at $s=(1-m)/d$. The method for finding these residues is explained in \cite{MW}.

Each summand has a simple pole $s=-m$ for $m \in \mathbb{N}$, with total residue
\begin{equation}  \label{res5}
\sum_{k=0}^{\infty}  \sum_{m=1}^{R} \frac{(-1)^{k+m} \zeta(1-m-k)}{m! k!} a_0^k a_d^{m} x^{k+m} \zeta(-m,\boldsymbol{\alpha}).
\end{equation}

For all $T>0$, $I_3$ is the sum of \eqref{res1}, \eqref{res2}, \eqref{res3}, \eqref{res4} and \eqref{res5} by Cauchy's residue theorem. 

We now reconcile with $I_2$ with $I_3$.  We write \eqref{swap} as 
\begin{equation} \label{interchangeI3}
I_3= \sum_{k=0}^{\infty} \Big( \int_{-R+iT}^{-R-iT} +\int_{(c)}+\int_{L_{R,T}}+\int_{L_{R,-T}} \Big)  \frac{(-1)^k \zeta(s+1-k)}{k!} \Gamma(s) a_0^k a_d^{-s} x^{k-s} \zeta(s,\boldsymbol{\alpha}) ds,
\end{equation}
where $L_{R,T}$ and $L_{R,-T}$ denote horizontal sides of $\mathcal{M}_{R,T}$, oriented appropriately. We now apply Lemma $\ref{MWbound}$ and a similar argument used to establish \eqref{examp}. From this we see that 
\begin{multline} \label{domco}
\Big | \int_{L_{R,\pm T}} \Big |  \ll_{f,R,X}  \sum_{k=1}^{\infty} \Big( \frac{e |a_0 x|}{\pi} \Big)^k (k+R)^R \exp  \Bigg(-\Delta |T| + \log \Big(1+ \Big( \frac{|T|}{k} \Big)^{k+R}  \Big)+O_{\boldsymbol{\alpha},R}(\log |T|) \Bigg).
\end{multline} 
Without loss of generality we may assume that $|T| \geq X^{\frac{L}{d}}$. Arguing as above we see that the summand of \eqref{domco} is bounded above by \eqref{inifI2} with $C_{\boldsymbol{\alpha}}$ replaced with $C_{\boldsymbol{\alpha},R}$. By the Dominated Convergence Theorem we can let $T \rightarrow \infty$ in each of the summands and the hence the limit of \eqref{domco} as $T \rightarrow \infty$ is $0$.

All that remains to do is bound the contribution to \eqref{interchangeI3} from the line $\Re(s)=-R$. Using \eqref{pred2}, \eqref{stirl}, Lemma \ref{MWbound} and performing a similar computation used to establish \eqref{examp},  the integrand of \eqref{interchangeI3} (say for $|t| \geq 1)$ is bounded above by 
\begin{align} 
&\ll_{f,R} (X \Delta)^{-R} |-R+it|^{-R-\frac{1}{2}} |t|^{O_{\boldsymbol{\alpha},R}(1)} \exp \big({- \Delta |t|} \big) \frac{|\zeta(1-R-k+it)|}{k!} |a_0 x|^k \nonumber \\
& \ll_{f,R}  (X \Delta)^{-R}  |t|^{O_{\boldsymbol{\alpha},R}(1)}  \Big( \frac{e |a_0 x|}{\pi} \Big)^k (k+R)^R \Big(1+\frac{|t|^{k+R}}{k^{k+R}} \Big) \exp \big({- \Delta |t|} \big) \label{unif2}. 
\end{align}
When $|t| \leq 1$ and $\text{Re}(s)=-R$ (with $R \notin \frac{1}{d} \mathbb{Z}$), $|\zeta(s,\boldsymbol{\alpha})|=O_{\boldsymbol{\alpha},R}(1)$ by the Extreme Value Theorem. Thus the integrand of \eqref{interchangeI3} in this case is bounded above by
\begin{align} 
& \ll_{f,R} (X \Delta)^{-R} \Big( \frac{e |a_0 x|}{\pi} \Big)^k (k+R)^{R} |-R+it|^{-R-\frac{1}{2}} \exp({-\Delta |t|}) \nonumber \\
& \ll_{f,R} (X \Delta)^{-R} \Big( \frac{e |a_0 x|}{\pi} \Big)^k (k+R)^{R}.  \label{unif1}
\end{align}
Using \eqref{unif2} and \eqref{unif1} we can use a similar uniform convergence argument appearing above to interchange the summation over $k$ and the integral $\int_{-R+iT}^{-R-iT}$ in \eqref{interchangeI3}. Doing this we obtain
\begin{multline} \label{final2}
\Big | \int_{-R+iT}^{-R-iT} \sum_{k} \Big | \ll_{f,R} (X \Delta)^{-R} \Bigg( \int_{1}^{\infty} |t|^{O_{\boldsymbol{\alpha},R}(1)} \exp \big({- \Delta |t|} \big)  \\
\times \sum_k  \Big( \frac{e |a_0 x|}{\pi} \Big)^k (k+R)^R \Big(1+\frac{|t|^{k+R}}{k^{k+R}} \Big) dt +\sum_k \Big( \frac{e |a_0 x|}{\pi} \Big)^k(k+R)^{R} \Bigg). 
\end{multline}
Making a change of variable, the right hand side of \eqref{final2} is bounded above by 
 \begin{align*}
& \ll_{f,R} (X \Delta)^{-R} \sum_{k=0}^{\infty} (k+R)^R \Big(  \frac{e |a_0 x|}{ \pi} \Big)^k \Bigg(\frac{\Gamma \big(k+O_{\alpha,R}(1) \big)}{ \big |\Delta \big|^{k+O_{\boldsymbol{\alpha},R}(1)} k^k}+\Gamma \big( O_{\boldsymbol{\alpha}, R}(1) \big) \Bigg) \\
& \ll_{f,R} X^{-R} \Delta^{-R-O_{\boldsymbol{\alpha},R}(1)} \sum_{k=0}^{\infty} (k+R)^R \Big(  3 e |a_0 x| X^{\frac{L}{d}} \Big)^k \\
& \ll_{f,R} X^{-R+O_{\boldsymbol{\alpha},R}(L/d)}\\
& \ll_{f,R} X^{-R+\varepsilon},  
\end{align*}
where the last inequalities holds for all sufficiently large fixed $X$ and $L$ chosen small enough depending on $R, \boldsymbol{\alpha}$ and $\varepsilon$. 

In the case $a_0=0$, we need only analyse the integral
\begin{equation*}
I_3:=\int_{(c)} \Gamma(s) \zeta(s+1) \zeta(s,\boldsymbol{\alpha}) a_d^{-s} x^{-s} ds
\end{equation*}
which can be handled by similar arguments used above. Putting all of the above together we obtain the result.
\end{proof}

\subsection{Major arc estimates}
Let $f \in \mathbb{Z}[y]$ as is in the Theorem \ref{theorem1}. Since all the coefficients of $f$ are non-negative, there is a smooth $\psi:\mathbb{R}_{\geq 0} \rightarrow \R_{\geq 0}$ such that $(\psi \circ f)(x)=x$. In our implementation of the circle method in Section \ref{maintheorems}, for large $U>0$ we will need major arc estimates for the exponential sum
\begin{equation*}
\mathcal{F}(\Theta):=\sum_{y=1}^{\psi(U)} e \big(\Theta f(y) \big).
\end{equation*}
This is more general than the linear combination of single monomial and linear term originally considered for Waring's problem, see \cite[Theorem~4.1]{V1}.  To this end, fix $U>0$ large and for $\Theta \in \R, a,b \in \mathbb{Z}$ and $q \in \N$, we define the auxiliary exponential sums
\begin{align*}
\mathcal{S}(q,a,b,f)&:=\sum_{y=1}^{q} e \Big( \frac{a f(y)+by}{q} \Big), \\
\mathcal{S}(q,a,f)&:=\mathcal{S}(q,a,0,f).
 \end{align*}
Furthermore, we define the integral function
\begin{align*}
v_{f}(\beta)&=\int_{0}^{\psi(U)} e \big(\beta f(\gamma) \big) d \gamma,
\end{align*}
and
\begin{equation*}
\mathcal{V}(\Theta,q,a,f):=q^{-1} \mathcal{S}(q,a,f) v_f(\Theta-a/q), \\
\end{equation*}

The proof of Lemma \ref{sing} will be a modification of \cite[Theorem~4.1]{V1}. Where Vaughan appeals to Hua's bound in \cite[Theorem~4.1]{V1}, we will instead appeal to Weyl's inequality for exponential sums whose argument is a polynomial. The property that $f \in \mathbb{Z}[y]$ has non-negative coefficients and no linear term will play a crucial role in the proof of Lemma \ref{sing}. 

\begin{lemma} \label{sing}
Let $f(y) \in \mathbb{Z}[y]$ be as in Theorem \ref{theorem1}. Suppose $a,q \in \N$ such that $(a,q)=1$ and $\Theta=a/q+\beta$ where $|\beta | \leq 1/q$. Then for any $\varepsilon>0$ we have
\begin{equation} \label{ineq1}
\mathcal{F}(\Theta)-\mathcal{V}(\Theta,q,a,f) \ll_{f,\varepsilon} q^{1-2^{1-d}+\varepsilon}(1+U |\beta|)^{\frac{1}{2}}.
\end{equation}
\end{lemma}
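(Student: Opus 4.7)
The plan is to adapt Vaughan's proof of the classical major-arc approximation \cite[Theorem~4.1]{V1}, replacing the appeal to Hua's lemma by an appeal to Weyl's inequality for polynomial exponential sums. The exponent $1-2^{1-d}$ in the claimed bound is precisely what Weyl's inequality yields for a polynomial of degree $d$.

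First I would decompose $\mathcal{F}(\Theta)$ into residue classes modulo $q$. Writing $y=qm+r$ with $1 \leq r \leq q$ and using $f \in \mathbb{Z}[y]$ to obtain $f(qm+r) \equiv f(r) \pmod{q}$, I arrive at
\begin{equation*}
\mathcal{F}(\Theta) = \sum_{r=1}^{q} e\left(\frac{af(r)}{q}\right) \sum_{\substack{m \geq 0 \\ qm+r \leq \psi(U)}} e\big(\beta f(qm+r)\big).
\end{equation*}
Next, I would replace each inner sum by the integral $q^{-1} v_f(\beta)$ using Euler--Maclaurin (or partial summation in $m$). The combined main term after summing over $r$ is exactly $q^{-1}\mathcal{S}(q,a,f) v_f(\beta) = \mathcal{V}(\Theta,q,a,f)$. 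The resulting error term involves the derivative $\beta f'(\gamma) e(\beta f(\gamma))$ integrated against a step-function discrepancy; a standard rearrangement by partial summation shows that this error is bounded by a factor $(1+U|\beta|)^{1/2}$ (tracking the oscillation of $e(\beta f(\gamma))$ on $[0,\psi(U)]$, where $f(\gamma) \leq U$) multiplied by the supremum over $1\leq N \leq q$ of the incomplete sums $G_N := \sum_{r=1}^{N} e(af(r)/q)$.

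The crux is to bound $G_N$ by $O_{f,\varepsilon}(q^{1-2^{1-d}+\varepsilon})$. Completing $G_N$ and applying Weyl's inequality to the polynomial $af(\cdot)/q$ yields the desired bound, provided that one of its coefficients is coprime to $q$. The leading coefficient is $aa_d/q$ with $(a,q)=1$, so when $(a_d,q)=1$ Weyl applies directly. Otherwise we rely on the hypothesis that $f$ is non-constant modulo every prime $p \leq d$ together with $(a_0,\ldots,a_d)=1$: for each prime power $p^e \| q$ with $p \leq d$ some non-leading coefficient $a_j$ is coprime to $p$, and for primes $p > d$ the Weyl estimate tolerates vanishing of the leading coefficient through its dependence on $d$. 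The hypothesis $a_1=0$ further simplifies the reduction since the auxiliary sum $\mathcal{S}(q,a,b,f)$ collapses to $\mathcal{S}(q,a,f)$ at the relevant stage.

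The main obstacle I foresee is the careful bookkeeping required when the coefficients of $f$ share prime factors with $q$. Unlike the monomial case $f(y)=y^d$ treated in \cite{V1,G}, where Weyl's inequality applies verbatim to the sum $\sum_r e(ar^d/q)$ because $(a,q)=1$ suffices, here one must exploit the full non-constancy hypothesis through a case analysis based on the prime factorization of $q$ relative to $(a_0,\ldots,a_d)$. Once this Weyl estimate for $G_N$ is established uniformly in $N$, the remainder of the argument is a routine combination with the $(1+U|\beta|)^{1/2}$ factor produced by partial summation.
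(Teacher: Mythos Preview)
Your outline has a genuine gap at the step where you claim the $(1+U|\beta|)^{1/2}$ factor. Writing $y=qm+r$ and applying Euler--Maclaurin or partial summation in $m$ replaces each inner sum by $q^{-1}\int_0^{\psi(U)} e(\beta f(\gamma))\,d\gamma$ with an error governed by the total variation of $e(\beta f(\cdot))$ on $[0,\psi(U)]$, which is $\asymp |\beta|\int_0^{\psi(U)} f'(\gamma)\,d\gamma \asymp |\beta|U$. Summing over $r$ against the incomplete sums $G_N$ therefore yields an error of size $(1+U|\beta|)\sup_N|G_N|$, not $(1+U|\beta|)^{1/2}\sup_N|G_N|$. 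There is no ``standard rearrangement by partial summation'' that produces the square root; obtaining it is precisely the substance of the lemma.

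The paper instead follows Vaughan's scheme: detect $y\equiv m\pmod q$ by additive characters, so that
\[
\mathcal{F}(\Theta)=q^{-1}\sum_{-q/2<b\le q/2}\mathcal{S}(q,a,b,f)\,F(b),\qquad F(b)=\sum_{y\le M}e\big(\beta f(y)-by/q\big),
\]
then expresses $F(b)$ as a sum of oscillatory integrals $I(c)=\int_0^M e(\beta f(\gamma)-c\gamma/q)\,d\gamma$ via \cite[Lemma~4.2]{V1}. The term $b=0$ gives $\mathcal{V}(\Theta,q,a,f)$. For $b\neq 0$ one performs a stationary-phase-type splitting of $I(b)$ according to whether $|\beta f'(\gamma)-b/q|$ is above or below a threshold $\delta=|\beta|^{1/(2d-2)}(|b|/q)^{(d-2)/(2d-2)}$; integration by parts controls the large-derivative region, and the small-derivative region is bounded by showing $|\gamma_2-\gamma_1|\ll_f \delta^{-1}$. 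It is in this last estimate that the hypotheses $a_1=0$ and $a_j\ge 0$ are actually used: they force $f'(\gamma_2)-f'(\gamma_1)\gg_f (\gamma_2-\gamma_1)\gamma_2^{d-2}$, which combined with the constraint $|\beta|f'(\gamma_2)\asymp |b|/q$ yields the needed bound. Summing the resulting $\delta^{-1}$ over $b$ is what produces $(U|\beta|)^{1/2}$. Your remark that $a_1=0$ makes $\mathcal{S}(q,a,b,f)$ collapse to $\mathcal{S}(q,a,f)$ is a misreading: the $by$ term comes from the Fourier expansion, not from the linear coefficient of $f$.

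Your worry about Weyl's inequality is also overcomplicated. Since $(a,q)=1$, the leading coefficient $aa_d/q$ reduces to $\tilde a/\tilde q$ with $\tilde q=q/(a_d,q)\gg_f q$, because $(a_d,q)\le a_d=O_f(1)$. Weyl's inequality \cite[Theorem~4.3]{NA} applied with denominator $\tilde q$ then gives $|\mathcal{S}(q,a,b,f)|\ll_{f,\varepsilon} q^{1-2^{1-d}+\varepsilon}$ uniformly in $b$, with no case analysis on primes dividing $q$. The non-constancy hypothesis modulo primes $p\le d$ is not used here; it enters only in Lemma~\ref{constant}.
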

\begin{proof}
For brevity put $M:=\psi(U)$. We write
\begin{align}
\mathcal{F}(\Theta)&=\sum_{y \leq M} e \big(\beta f(y) \big) \sum_{\substack{m=1 \\ m \equiv y \modu{q} }}^{q} e \Big( \frac{a f(m)}{q} \Big) \nonumber \\
&=q^{-1} \sum_{-q/2<b \leq q/2} \mathcal{S}(q,a,b,f) F(b)  \label{first},
\end{align}
where 
\begin{equation*}
F(b):=\sum_{y \leq M} e \big( \beta f(y)-b y/q \big).
\end{equation*}
For each $-q/2<b \leq q/2$, the function $\beta f^{\prime}(\gamma)-b/q$ is monotonic in the variable $\gamma \in [0,M]$. Thus \cite[Lemma~4.2]{V1} of Vaughan can be applied to the above interval for $\gamma$ with $H_1=-H_2=-H$ where $H = \lceil |\beta| f^{\prime}(M)+1 \rceil$. This yields
\begin{equation*}
F(b)=\sum_{h=-H}^H I(b+hq)+O_f \big( \log(2+H)   \big)
\end{equation*}
where 
\begin{equation*}
I(c):=\int_{0}^M e \big( \beta f(\gamma)-c \gamma/q \big)  d \gamma.
\end{equation*}
Since the polynomial is fixed and $(a,q)=1$, we have $(a a_d, q)=O_f(1)$ uniformly in $q$.  Thus for each $q$ there exists $\tilde{a}, \tilde{q} \in \mathbb{Z}$ such that $\tilde{q}>0$, $(\tilde{a},\tilde{q})=1$, $\tilde{q} \gg_f q$ and  $aa_d/q=\tilde{a}/\tilde{q}$. This rational number is the leading coefficient of the polynomial $(af(y)+by)/q$, so we can apply Weyl's inequality \cite[Theorem~4.3]{NA} to obtain
\begin{equation} \label{wbound}
|\mathcal{S}(q,a,b,f)| \ll_{f,\varepsilon} q^{1+\varepsilon} \tilde{q}^{-2^{1-d}} \ll_{f,\varepsilon} q^{1-2^{1-d}+\varepsilon}
\end{equation}
for any fixed $\varepsilon>0$. Note that \eqref{wbound} is uniform in $b$ because the implied constant in \cite[Theorem~4.3]{NA} depends only on $\varepsilon$ and the degree $d$ of the polynomial. Thus
\begin{multline} \label{mainterm}
\mathcal{F}(\Theta)-q^{-1} \mathcal{S}(q,a,f) v_f(\beta) \\
=q^{-1} \sum_{\substack{-B<b \leq B \\ b \neq 0}} \mathcal{S}(q,a,b,f) I(b)+O_{f,\varepsilon} \big( q^{1-2^{1-d}+\varepsilon} \log(2+H) \big)
\end{multline}
where $B=\big(H+\frac{1}{2} \big)q$. Now we consider \eqref{ineq1}. Note that the error term in \eqref{mainterm} is acceptable. 

Using integration by parts we see that the contribution to $I(b)$ from those $\gamma \in [0,M]$ such that 
\begin{equation} \label{goodgamma}
\big | \beta f^{\prime}(\gamma) -b/q \big | \geq  \frac{1}{2} \Big |\frac{b}{q} \Big |
\end{equation}
is $\ll q/|b|$. Thus the total contribution from $\gamma$ to \eqref{mainterm} satisfying \eqref{goodgamma} is bounded by
\begin{equation} \label{fine2}
\ll_{f,\varepsilon} q^{1-2^{1-d}+\varepsilon} \log(2B) \ll_{f,\varepsilon} q^{1-2^{1-d}+\varepsilon} \big(1+U |\beta | \big)^{\frac{1}{2}}.
\end{equation}

For $\gamma$ not satisfying \eqref{goodgamma} we must have
\begin{equation} \label{remgam}
| \beta f^{\prime}(\gamma)-b/q | \leq \frac{ |b|}{2 q}.
\end{equation} 
Such $\gamma$ must satisfy
\begin{equation} \label{divide}
\frac{|b|}{2 q} \leq  | \beta | f^{\prime}(\gamma)  \leq \frac{ 3|b|}{2q}.
\end{equation}
Thus for sufficiently large $M>1$ we have
\begin{equation*}
|b| \ll_f q |\beta| f^{\prime}(M) \ll_f q |\beta| M^{d-1}.
\end{equation*}
For such a $b$ let 
\begin{equation*}
\delta:=|\beta|^{\frac{1}{2d-2}} \Big( \frac{|b|}{q} \Big)^{\frac{d-2}{2d-2}}.
\end{equation*}
Again applying integration by parts, the contribution to $I(b)$ from the $\gamma$ satisfying \eqref{remgam} with $|\beta f^{\prime}(\gamma)-b/q| \geq \delta$ is $\ll \delta^{-1}$. We now treat the remaining $\gamma$ satisfying \eqref{remgam}. In other words, such $\gamma$ satisfy $|\beta f^{\prime}(\gamma)-b/q| \leq \delta$. We apply the triangle inequality and see such $\gamma$ must lie in an interval $[\gamma_1,\gamma_2]$ satisfying: 
\begin{align}
2 \delta &\geq |\beta|  | f^{\prime}(\gamma_2)-f^{\prime}(\gamma_1)  | \nonumber \\
& \geq  |\beta| \sum_{j=1}^{d} a_j j \big(\gamma_2^{j-1} -\gamma_1^{j-1}\big) \nonumber \\
& \geq  |\beta|(\gamma_2-\gamma_1) \sum_{j=2}^{d} a_j j \gamma_2^{j-2} \nonumber \\
& \gg_f  |\beta|(\gamma_2-\gamma_1) \frac{f^{\prime}(\gamma_2)}{\gamma_2}  \label{keya1},
\end{align}
where we have used the facts that $a_j \geq 0$ for $0 \leq j \leq d$, $\gamma_2>\gamma_1$ and $a_1=0$. 

Now consider the case $\gamma_2 \geq 1$. We have 
\begin{equation} \label{keya4} 
f^{\prime}(\gamma_2) \asymp_f \gamma_2^{d-1} \quad \text{and} \quad \frac{f^{\prime}(\gamma_2)}{\gamma_2} \asymp_f \gamma_2^{d-2}.
\end{equation}
Thus 
\begin{equation} \label{keya2}
|\beta|(\gamma_2-\gamma_1) \frac{f^{\prime}(\gamma_2)}{\gamma_2} \gg_{f} |\beta| (\gamma_2-\gamma_1)  \gamma_2^{d-2}. 
\end{equation}
Furthermore \eqref{divide} and \eqref{keya4} imply that 
\begin{equation} \label{keya3}
\gamma_2^{d-2} \asymp_f  \Big (\frac{|b|}{q |\beta|} \Big)^{\frac{d-2}{d-1}}.
\end{equation}
Combining \eqref{keya1}, \eqref{keya2} and \eqref{keya3} we have 
\begin{equation*}
|\gamma_2-\gamma_1| \ll_f \delta^{-1}.
\end{equation*}

Consider the case $0<\gamma_2<1$. Then
\begin{equation} \label{keya5}
|\beta|(\gamma_2-\gamma_1) \frac{f^{\prime}(\gamma_2)}{\gamma_2}  \gg_f |\beta|(\gamma_2-\gamma_1) f^{\prime}(\gamma_2) \gg_f (\gamma_2-\gamma_1) \frac{|b|}{q},
\end{equation}
where the last inequality follows from \eqref{divide}. Then \eqref{keya1} and \eqref{keya5} imply that 
\begin{equation*}
\gamma_2-\gamma_1 \ll_f \delta^{-1}  |b|^{-\frac{2}{2d-2}}  |\beta q|^{\frac{2}{2d-2}} \ll_f \delta^{-1},
\end{equation*}
where the last line follows from the facts that $|\beta| \leq 1/q$ and $b \neq 0$ is an integer.

Thus the total contribution to \eqref{mainterm} from $\gamma$ satisfying \eqref{remgam} is 
\begin{align}
 & \ll_{f,\varepsilon} \sum_{0<b \ll_f q |\beta| M^{d-1}} q^{-2^{1-d}+\varepsilon} | \beta |^{-\frac{1}{2d-2}} \Big( \frac{q}{|b|} \Big)^{\frac{d-2}{2d-2}} \nonumber \\
 & \ll_{f,\varepsilon}  q^{\frac{d-2}{2d-2} -2^{1-d}+\varepsilon} | \beta |^{-\frac{1}{2d-2}} \Big(q |\beta| M^{d-1} \Big)^{1-\frac{d-2}{2d-2}} \nonumber \\
 & \ll_{f,\varepsilon} q^{1-2^{d-1}+\varepsilon} |\beta|^{\frac{1}{2}} M^{\frac{d}{2}} \nonumber \\
 & \ll_{f,\varepsilon} q^{1-2^{d-1}+\varepsilon} |\beta|^{\frac{1}{2}} U^{\frac{1}{2}} \label{fine}. 
\end{align}
Combining \eqref{mainterm}, \eqref{fine2} and \eqref{fine} yields the result.
\end{proof}

As explained in Section \ref{maintheorems}, most major arcs will actually be subsumed by the error coming from the minor arcs. The following two lemmas will ensure that we can conclude this.
\begin{lemma} \label{lemma3}
Let $f \in \mathbb{Z}[y]$ be as in Theorem \ref{theorem1}. Let $\Theta \in \R, a,q \in \N$ be such that $(a,q)=1$ and $\beta = \Theta - a/q$ and $|\beta| \leq 1/q$. Then for all sufficiently large $X$
\begin{equation*}
\big | \Phi _f \big(\rho e(\Theta ) \big) \big | \leq \Gamma \big( 1 + \frac{1}{d} \big) \Big( \frac{X}{a_d} \Big)^{1/d} \sum\limits_{j = 1}^\infty  \frac{|S(q_j,\tilde{a}_j,f)|}{j^{(d + 1)/d}q_j} +O_{f,\varepsilon} \Big(q^{1-2^{1-d} + \varepsilon} \log X { \big(1 + X| \beta | \big)^{\frac{1}{2}}} \Big)
\end{equation*}
where $q_j = q/(q,j)$, $\tilde{a}_j = a j/(q,j)$ and $\rho=\exp \big({-1/X} \big)$.
\end{lemma}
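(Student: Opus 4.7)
The plan is to expand $\Phi_f(\rho e(\Theta))$ via the series \eqref{aux}, pass the triangle inequality past the sum over $j$, and bound each inner exponential sum in $n$ by combining Abel summation with the major arc estimate of Lemma \ref{sing}. The main term will come out exactly from the model provided by Lemma \ref{sing}, while the rapid decay of the weight $e^{-jf(n)/X}$ will tame the tail in $j$.

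Set $S_j(\Theta) := \sum_{n \geq 1} e^{-jf(n)/X} e(j\Theta f(n))$, so $|\Phi_f(\rho e(\Theta))| \leq \sum_{j \geq 1}(1/j)|S_j(\Theta)|$. From $(a,q)=1$ a short check gives $(\tilde{a}_j,q_j)=1$ and $e(j a f(n)/q) = e(\tilde{a}_j f(n)/q_j)$, so $j\Theta \equiv \tilde{a}_j/q_j + j\beta \pmod{1}$. Apply Abel summation with $a_n = e(j\Theta f(n))$, $g(u) = e^{-jf(u)/X}$, and $T_j(N) := \sum_{n \leq N} e(j\Theta f(n))$; using $T_j(N)g(N) \to 0$ super-polynomially yields
\begin{equation*}
S_j(\Theta) = \int_1^\infty T_j(\lfloor u \rfloor)\,\frac{jf'(u)}{X}\, e^{-jf(u)/X}\,du.
\end{equation*}
Then decompose $T_j(\lfloor u \rfloor) = q_j^{-1} S(q_j, \tilde{a}_j, f)\int_0^{\lfloor u \rfloor} e(j\beta f(y))\,dy + E_j(u)$ via Lemma \ref{sing} (taking $\psi(U) = \lfloor u \rfloor$), where $|E_j(u)| \ll_{f,\varepsilon} q_j^{1-2^{1-d}+\varepsilon}(1 + f(u)|j\beta|)^{1/2}$.

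For the main term, substitute the leading $\mathcal{V}$-part into the Abel integral and swap the order of integration; the $u$-integral collapses by the fundamental theorem of calculus to $e^{-jf(y)/X}$ (with a harmless $O(1)$ boundary contribution from $y \in [0,1]$), leaving $q_j^{-1} S(q_j, \tilde{a}_j, f) \int_0^\infty e(j\beta f(y)) e^{-jf(y)/X}\, dy$. Because $f$ has non-negative coefficients (as noted after Theorem \ref{theorem1}), $f(y) \geq a_d y^d$ for $y \geq 0$, so this integral has modulus at most $\int_0^\infty e^{-j a_d y^d/X}\, dy = \Gamma(1 + 1/d)(X/(j a_d))^{1/d}$; weighting by $1/j$ and summing in $j$ reproduces the claimed main sum. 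For the $E_j$-contribution, the substitution $V = jf(u)/X$ reduces the Abel integral to $\int_0^\infty (1 + X|\beta| V)^{1/2} e^{-V}\, dV \ll (1+X|\beta|)^{1/2}$, giving $\ll q^{1-2^{1-d}+\varepsilon}(1+X|\beta|)^{1/2}$ per $j$ (using $q_j \leq q$ and that the exponent lies in $(0,1)$). Truncating the $j$-sum at $j \leq X$, beyond which the elementary bound $|S_j(\Theta)| \ll e^{-j a_d/X}$ (from $f(n) \geq a_d n^d$) makes the tail $\sum_{j > X}(1/j)|S_j| = O(1)$, the remaining harmonic sum $\sum_{j \leq X}(1/j) \ll \log X$ produces the stated error $O_{f,\varepsilon}(q^{1-2^{1-d}+\varepsilon} \log X(1+X|\beta|)^{1/2})$.

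The main obstacle is that the hypothesis $|\beta| \leq 1/q$ of Lemma \ref{sing} becomes $|j\beta| \leq 1/q_j$ in the application, which can fail when $j \nmid q$ and $|\beta|$ is close to $1/q$. I would address this by replacing the use of Lemma \ref{sing} for such $j$ by the direct residue-class decomposition $S_j(\Theta) = \sum_{m=1}^{q_j} e(\tilde{a}_j f(m)/q_j) G_{j,m}(\beta)$ with $G_{j,m}(\beta) := \sum_{k \geq 0} e^{-jf(m + k q_j)/X} e(j\beta f(m + k q_j))$, and applying Euler--Maclaurin (or Poisson summation) to the purely smooth inner $k$-sum; the leading integral matches and the error has the same shape as before, but the argument requires no smallness of $|j\beta|$.
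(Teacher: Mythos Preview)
Your proof is essentially the paper's own argument: write $e^{-jf(n)/X}$ as an integral (i.e., Abel summation), apply Lemma~\ref{sing} to the partial sums $\sum_{n\le u}e(j\Theta f(n))$, bound the resulting main term via $f(y)\ge a_d y^d$ and the error by integrating $(1+f(u)j|\beta|)^{1/2}$ against $e^{-ja_d u^d/X}$, then truncate at $J=X$. The obstacle you flag---that $|j\beta|\le 1/q_j$ can fail---is in fact not verified in the paper either (it simply invokes Lemma~\ref{sing} with $q\to q_j$, $\beta\to j\beta$), so your explicit residue-class workaround is an extra layer of care rather than a deviation from the published approach.
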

\begin{proof}
We first write $\Phi_f$ as
\begin{equation*}
 \Phi_f(\rho e(\Theta)) = \sum_{j=1}^\infty \sum_{n=1}^\infty \frac{1}{j} \exp(-j f(n)/X) \e ( j f(n) \Theta ).
\end{equation*}
For the first exponential we obtain
\begin{equation*}
\exp(-j f(n)/X) = \int_n^\infty f'(u)jX^{-1} \exp(-j f(u)/X) du,
\end{equation*}
and therefore
\begin{equation} \label{tail}
\Phi_f(\rho e(\Theta)) = \sum_{j=1}^\infty \frac{1}{j} \int_0^\infty f'(u)jX^{-1} \exp(-j f(u)/X) \sum_{n \le u}  \e ( j f(n) \Theta ) du.
\end{equation}
Recall that the hypothesis on $f$ of Theorem \ref{theorem1} implies that $a_j \geq 0$ for all $0 \leq j \leq d$. Trivially
\begin{align*}
& \Bigg | \int_0^\infty f'(u)jX^{-1} \exp({-j f(u)/X}) \sum_{n \le u}  \e \big( j f(n) \Theta \big) du \Bigg | \nonumber \\
 & \leq \int_0^\infty u f'(u)jX^{-1} \exp \big({-j f(u)/X} \big) du \\
 &  =\int_{0}^{\infty} \exp \big({-j f(u)/X} \big) du  \\ 
&\leq \int_{0}^{\infty} \exp \big({-ja_d u^d /X} \big) du \\
& \ll_{f}  \Big(\frac{X}{j} \Big)^{1/d} \int_{0}^{\infty}  \exp(-y^d)  dy   \\
& \ll_{f}  \Big(\frac{X}{j} \Big)^{1/d},
\end{align*}
where the third and fourth lines follow from integration by parts and the non-negativity of the polynomial coefficients respectively.

Let $J$ be a parameter of our choice and consider the tail of the sum over $j$ in \eqref{tail}. We have
\begin{multline} \label{tailexp}
\Bigg | \sum_{j=J+1}^\infty \frac{1}{j} \int_0^\infty f'(u)jX^{-1} \exp(-j f(u)/X) \sum_{n \le u}  \e ( j f(n) \Theta ) du \Bigg | \\
\ll \sum_{j=J+1}^\infty \frac{1}{j} \bigg(\frac{X}{j}\bigg)^{1/d} \ll \bigg(\frac{X}{J}\bigg)^{1/d}.
\end{multline}
It remains to consider
\begin{equation} \label{truncate}
\sum_{j=1}^J \frac{1}{j} \int_0^\infty f'(u)jX^{-1} \exp(-j f(u)/X) \sum_{n \le u}  \e ( j f(n) \Theta ) du.
\end{equation}
By Lemma \ref{sing} we have
\begin{equation*}
\sum_{n \le u} \e (j f(n) \Theta) = q_j^{-1}S(q_j,\tilde{a}_j,f) \int_0^{u} \e (j \beta f(\gamma)) d \gamma \\
 + O_{f,\varepsilon} \bigg (q_j^{1-2^{1-d}+\varepsilon} \big(1+f(u) j |\beta| \big)^{\frac{1}{2}} \bigg).
\end{equation*}
Thus \eqref{truncate} becomes
\begin{equation} \label{mt}
\sum_{j=1}^J \frac{S(q_j,\tilde{a}_j,f)}{j q_j} \int_0^\infty f'(u) j X^{-1} \exp(-j f(u)/X) \int_0^{u} \e \big(j \beta f(\gamma) \big) d \gamma du + E
\end{equation}
where
\begin{align*}
|E| & \ll_{f,\varepsilon} \sum_{j=1}^J \frac{q_j^{1-2^{1-d}+\varepsilon}}{j} \Big( \int_0^1+\int_{1}^{\infty} \Big) f'(u) j X^{-1} \exp(-j f(u)/X) \Big(1+ \big(f(u) j |\beta| \big)^{\frac{1}{2}} \Big) du  \\
& \ll_{f,\varepsilon} q^{1-2^{1-d}+\varepsilon} \Big(\frac{J}{X}+\frac{J^{\frac{3}{2}} |\beta|^{\frac{1}{2}} }{X}+ \sum_{j=1}^{J} \frac{1}{j} \int_{1}^{\infty} f'(u) j X^{-1}  \exp({-j f(u)/X}) \Big(1+ \big(f(u) j |\beta| \big)^{\frac{1}{2}} \Big) du \Big) \\
& \ll_{f,\varepsilon} q^{1-2^{1-d}+\varepsilon}  \Big(\frac{J}{X}+\frac{J^{\frac{3}{2}} |\beta|^{\frac{1}{2}} }{X}+\sum_{j=1}^{J} \frac{1}{j} \int_{1}^{\infty} d u^{d-1} j X^{-1} \exp({-j a_d u^d/X}) \Big(1+ \big(u^d j |\beta|)^{\frac{1}{2}} \Big) du \Big) \\
& \ll_{f,\varepsilon} q^{1-2^{1-d}+\varepsilon} \Big(\frac{J}{X}+\frac{J^{\frac{3}{2}} |\beta|^{\frac{1}{2}} }{X}+\log J \big(1+|\beta|^{\frac{1}{2}} X^{\frac{1}{2}} \big)  \Big),  
\end{align*}
where the last line follows from \cite[p.~27]{G}. Choosing $J=X$ yields the bound
\begin{equation} \label{first}
|E| \ll_{f,\varepsilon} q^{1-2^{1-d}+\varepsilon} \log X \Big(1+|\beta|^{\frac{1}{2}} X^{\frac{1}{2}} \Big).
\end{equation}

We now turn our attention to the main terms in \eqref{mt}, integrating by parts we obtain  
\begin{equation*}
\sum_{j=1}^X \frac{S(q_j,\tilde{a}_j,f)}{j q_j} \int_{0}^{\infty} \exp \big( -j f(u) X^{-1}(1-2\pi i \beta X) \big) du.
\end{equation*}
This is bounded above by   
\begin{align}
& \sum_{j=1}^X \frac{|S(q_j,\tilde{a}_j,f)|}{j q_j} \int_{0}^{\infty} \exp \big( {-j f(u) X^{-1}} \big)  du  \nonumber\\
&  \leq \sum_{j=1}^X \frac{|S(q_j,\tilde{a}_j,f)|}{j q_j} \int_{0}^{\infty} \exp \big( {-j a_d u^d X^{-1}} \big)  du  \nonumber \\
 & \leq  \sum_{j=1}^J \frac{|S(q_j,\tilde{a}_j,f)|}{j^{1+\frac{1}{d}} q_j} \Gamma \Big(1+\frac{1}{d} \Big) \Big( \frac{X}{a_d} \Big)^{1/d} \label{second}.
\end{align}
Combining \eqref{tailexp}, \eqref{first} and \eqref{second} completes the proof.
\end{proof}

\begin{lemma} \label{constant}
Let $f \in \mathbb{Z}[y]$ be as in Theorem \ref{theorem1}. Then there exists a constant $0<C_f<1$ such that for all $q>1$ and $a \in \Z$ with $(a,q)=1$, 
\begin{equation*}
|S(q,a,f)| \leq C_f q.
\end{equation*}
\end{lemma}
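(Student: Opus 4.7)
The plan is to use multiplicativity together with Weyl-type bounds. First, by the Chinese Remainder Theorem, for coprime moduli $q_1,q_2$ one has the factorisation
\begin{equation*}
S(q_1 q_2, a, f) = S(q_1, a\,\overline{q_2}, f)\, S(q_2, a\,\overline{q_1}, f),
\end{equation*}
where $\overline{q_i}$ denotes the inverse of $q_i$ modulo the other factor. Hence the quantity $|S(q,a,f)|/q$ is multiplicative in $q$ (for $(a,q)=1$), so it suffices to produce a uniform constant $C_f<1$ with
\begin{equation*}
|S(p^k, a, f)|/p^k \;\leq\; C_f \qquad \text{for all prime powers } p^k>1 \text{ and all } (a,p)=1.
\end{equation*}

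For large prime-power moduli I would invoke Weyl's inequality in exactly the form used to establish Lemma~\ref{sing}. Writing $a\,a_d/p^k = \tilde a/\tilde q$ in lowest terms, one has $\tilde q \gg_f p^k$ since $f$ is fixed, which yields $|S(p^k,a,f)| \ll_{f,\varepsilon} p^{k(1 - 2^{1-d} + \varepsilon)}$. Choosing $\varepsilon < 2^{-d}$ then gives $|S(p^k,a,f)|/p^k \leq 1/2$ for every prime power $p^k \geq N_0$, where $N_0 = N_0(f)$ is a fixed threshold.

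For the finitely many prime powers $p^k < N_0$, I would argue directly. Note that $|S(p^k,a,f)|=p^k$ forces $y\mapsto a f(y)/p^k$ to be constant mod $1$, i.e. $f$ must be constant as a function on $\mathbb{Z}/p^k\mathbb{Z}$, which in turn forces $f$ to be constant mod $p$. The hypothesis that $f$ is non-constant mod $p$ for all $p\leq d$ precludes this when $p\leq d$; for any remaining prime with $d<p<N_0$ that might appear, one checks the same conclusion from the explicit factorisation $f-a_0 = a_d\, y\prod_{j=1}^{d-1}(y+\alpha_j)$ together with the content condition $(a_0,\ldots,a_d)=1$. Consequently $|S(p^k,a,f)| < p^k$ strictly for each such pair $(p^k,a)$, and since only finitely many such pairs exist, taking the maximum of the ratios produces some $C_{f,\mathrm{small}} < 1$. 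Setting $C_f := \max(1/2,\, C_{f,\mathrm{small}})$ completes the argument.

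The main obstacle will be the uniform application of Weyl's inequality across all $k\geq 1$ when $p\mid a_d$, because then the reduced denominator $\tilde q$ is strictly smaller than $p^k$ and the naive Weyl bound degrades. Here I would handle the deficit with a standard Hensel-type change of variable $y = y_0 + p^{\lceil k/2\rceil} z$ followed by Taylor expansion of $f$, which factors $S(p^k,a,f)$ into a character-type sum at modulus $p^{\lceil k/2\rceil}$ and a sum at modulus $p^{\lfloor k/2\rfloor}$, thereby reducing the estimate to lower-modulus instances of the same type and closing the induction.
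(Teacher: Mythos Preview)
Your approach mirrors the paper's: split moduli into large and small, apply Weyl's inequality for the large ones, and for the finitely many small ones argue that $f$ cannot be constant modulo the modulus, so that the strict inequality $|S|<q$ holds and a maximum over finitely many cases furnishes a constant below $1$. Your reduction to prime powers via multiplicativity is valid but unnecessary --- the paper simply applies Weyl directly to $q$ once $q$ exceeds a threshold $R_f$ and handles all $q\le R_f$ at once. Your final paragraph's worry about $p\mid a_d$ is likewise unnecessary: since $f$ is fixed, $\gcd(a_d,p^k)\le a_d$, so the reduced denominator still satisfies $\tilde q \gg_f p^k$ and Weyl applies with no degradation; the Hensel manoeuvre is not required.

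There is, however, a genuine gap in your treatment of primes $d<p<N_0$. You assert that non-constancy of $f$ modulo such $p$ follows from the factorisation of $f-a_0$ together with the content condition $(a_0,\ldots,a_d)=1$, but this does not follow. Take $f(y)=7y^2+1$: every hypothesis of Theorem~\ref{theorem1} is met (here $d=2$, $\alpha_1=0$, $a_1=0$, $a_0/a_d=1/7<1$, $a_{d-1}/a_d=0<1$, and $f$ is non-constant modulo~$2$), yet $f\equiv 1\pmod 7$, whence $|S(7,a,f)|=7$ for every $a$ coprime to $7$. So your step fails, and indeed the lemma as stated cannot hold for this $f$. The paper's own argument shares this lacuna: its appeal to Lagrange's theorem tacitly presumes that $f-c$ is a nonzero polynomial over $\mathbb{F}_p$, which the content condition alone does not guarantee when $p\mid a_j$ for all $j\ge 1$ but $p\nmid a_0$.
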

\begin{proof}
Applying Weyl's inequality \cite[Theorem~4.3]{NA}, we have \eqref{wbound} for $b=0$. Thus for $0<\tilde{C}_{f}<1$ there exists an integer $R_f$ such that for all $q \geq R_f$ we have 
\begin{equation*}
|S(q,a,f)| \leq \tilde{C}_f q.
\end{equation*}
Now consider the case $q \leq R_f$. Now
\begin{equation*}
f(y) \equiv c \modu{p}
\end{equation*}
has at most $d \leq p-1$ solutions by Lagrange's Theorem \cite[Theorem~5.21]{TA} and the hypothesis $(a_d,\ldots,a_0)=1$. Thus $f$ can't be constant as a function modulo each prime $p$ for all $p>d$. Together with the hypothesis that $f$ is non-constant as a function modulo primes $p \leq d$, we can conclude by the Chinese Remainder Theorem that for each $q$ there exists at least one $1 \leq g:=g(q) \leq q$ such that $f(g) \not \equiv 0 \modu{q}$. Thus $af(g) \not \equiv 0 \modu{q}$ since $(a,q)=1$. Thus
\begin{equation*}
|S(f,a,q) | \leq \Big |q-1+e \Big( \frac{af(g)}{q} \Big) \Big | \leq \Big |q-1+e \Big( \frac{1}{R_f} \Big) \Big | \leq \hat{C}_f q,
\end{equation*}
for some $0<\hat{C}_f<1$. Taking $C_{f}:=\max\{ \tilde{C}_f, \hat{C}_f\}$ is sufficient.
\end{proof}

\section{Proof of Main Theorems} \label{maintheorems}
We will prove the main theorem using the Hardy--Littlewood circle method. From Cauchy's theorem we have, 
\begin{equation} \label{partit}
p_f(n)=\int_{0}^1 \rho^{-n} \exp \big( \Phi_f(\rho e(\Theta))-2 \pi i n \Theta \big) d \Theta.
\end{equation}
Recall $X$ is implicitly defined by \eqref{X}. Observing the periodicity in the integrand of \eqref{partit} with respect to $\Theta$, we may replace $[0,1]$ with $\mathcal{U}:=[-X^{\frac{1}{d}-1},1-X^{\frac{1}{d}-1}]$. For $a,q \in \mathbb{N}$ such that $1 \leq q \leq X^{\frac{1}{d}}$, $1 \leq a \leq q$ and $(a,q)=1$, we define each disjoint major arc
\begin{equation*}
\mathfrak{M}(q,a):=\Big \{ \Theta \in \mathcal{U}: \Big | \Theta-\frac{a}{q} \Big | \leq q^{-1} X^{\frac{1}{d}-1} \Big \}.
\end{equation*}
Furthermore we set
\begin{equation*}
\mathfrak{M}:= \bigcup_{ \substack{1 \leq a \leq q \leq X^{1/d} \\ (a,q)=1 }} \mathfrak{M}(q,a).
\end{equation*}
 
The minor arcs are defined by $\mathfrak{m}:=\mathcal{U} \setminus \mathfrak{M}$. In typical applications of the circle method, the main terms for the asymptotic in question usually consist of the contributions coming from all major arcs, and the error from the minor arcs. The problem of determining the asymptotic behaviour of $p_{\mathcal{A}_f}(n)$ defies this rule of thumb. 

By periodicity of the integrand of \eqref{partit} we can denote $\mathfrak{M}(1,1)$ as $\mathfrak{M}(1,0)$. The main contributions come from $\mathfrak{M}(1,0)$, whereas the contributions from $\mathfrak{M} \setminus \mathfrak{M}(1,0)$ will be subsumed by the error contributed by $\mathfrak{m}$. Thus the arcs in $\mathfrak{M} \setminus \mathfrak{M}(1,0)$ are referred to as \emph{auxiliary major arcs}. The anatomy of the proof follows the decomposition
\begin{equation*}
\bigg(\int_{\mathfrak{M}(1,0)}+\int_{\mathfrak{M} \setminus \mathfrak{M}(1,0)}+\int_{\mathfrak{m}} \bigg) \rho^{-n} \exp \big( \Phi_f(\rho e(\Theta))-2 \pi i n \Theta \big) d \Theta.
\end{equation*}
 
We will follow the implementation of the circle method outlined in \cite{BMZ,G,V2}, with the key difference being the deployment of our Lemmas in Section \ref{Auxiliary lemmas} and our treatment of the major arc $\mathfrak{M}(1,0)$. First we need the following lemma to handle the minor arcs and a subset of the major arc $\mathfrak{M}(1,0)$ that does not contribute to the main terms.
\begin{lemma} \label{minor}
Let $f \in \mathbb{Z}[y]$ be as in Theorem \ref{theorem1} and let $\Theta \in \mathfrak{m}$. Then for any $\varepsilon>0$, we have 
\begin{equation} \label{minores}
|\Phi_f \big(\rho e(\Theta) \big)| \ll_{f,\varepsilon} X^{\frac{1}{d}+\varepsilon-\frac{1}{d 2^{d-1}}}.
\end{equation}
Let $0<L<1$ be fixed and suppose $\Theta \in \mathfrak{M}(1,0) \setminus (-X^{\frac{L}{d}-1},X^{\frac{L}{d}-1})$. Then
\begin{equation} \label{auxM01}
|\Phi_f \big(\rho e(\Theta) \big)| \ll_{f,\varepsilon} X^{\frac{1}{d}+\varepsilon-\frac{L}{d 2^{d-1}}}.
\end{equation}
\end{lemma}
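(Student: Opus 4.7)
The plan is to treat \eqref{minores} and \eqref{auxM01} with different techniques: Lemma \ref{lemma3} combined with Weyl's inequality on the Gauss-type sums $S(q_j,\tilde a_j,f)$ for the minor arcs, and a Mellin--Barnes contour estimate extending Lemma \ref{1} for the auxiliary major arc.

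For \eqref{minores}, I would first apply Dirichlet's approximation theorem with $Q = X^{(d-1)/d}$ to write $\Theta = a/q + \beta$ with $(a,q)=1$, $q \leq Q$, and $|\beta| \leq 1/(qQ)$. Since $\Theta \in \mathfrak{m}$, the definition of $\mathfrak{M}$ forces $q > X^{1/d}$: otherwise $|\beta| \leq q^{-1} X^{1/d-1}$ would place $\Theta$ in some $\mathfrak{M}(q,a)$. Invoking Lemma \ref{lemma3}, the Weyl bound \eqref{wbound} (as used in the proof of Lemma \ref{constant}) gives $|S(q_j,\tilde{a}_j,f)| \ll_{\varepsilon} q_j^{1-2^{1-d}+\varepsilon}$, and a divisor-sum decomposition over $r = (q,j)$ yields
\[
\sum_{j=1}^{\infty} \frac{q_j^{-2^{1-d}+\varepsilon}}{j^{(d+1)/d}} \ll q^{-2^{1-d}+\varepsilon} \sum_{r \mid q} r^{2^{1-d} - (d+1)/d - \varepsilon} \ll q^{-2^{1-d}+\varepsilon},
\]
since the divisor sum is $O(1)$. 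Combined with the prefactor $X^{1/d}$ and $q > X^{1/d}$, the main term is $\ll X^{1/d - 1/(d\cdot 2^{d-1}) + \varepsilon}$; the estimate $X|\beta| \leq X^{1/d}/q < 1$ absorbs the factor $(1+X|\beta|)^{1/2}$ in the error from Lemma \ref{lemma3}.

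For \eqref{auxM01}, Lemma \ref{lemma3} is ineffective since $q=1$ forfeits the Weyl saving, so I would instead revisit the Mellin--Barnes representation from the proof of Lemma \ref{1},
\[
\Phi_f(\rho e(\Theta)) = \frac{1}{2\pi i} \int_{(c)} \Li_{s+1}(e^{-a_0 x}) \Gamma(s) x^{-s} a_d^{-s} \zeta(s,\boldsymbol{\alpha})\, ds,
\]
where $x = (1 - 2\pi i \Theta X)/X$ and $c = 1/d + \varepsilon$. Note that $|a_0 x| \ll a_0 X^{1/d-1}$ remains small throughout the range, so the polylogarithm identity \eqref{polylog} still applies. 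In the regime $|\Theta| \geq X^{L/d-1}$ one has $X|\Theta| \gg 1$, hence $X\Delta \asymp 1/|\Theta|$. Combining \eqref{pred2}, \eqref{stirl}, \eqref{polylog}, and Lemma \ref{MWbound}, the integrand on $\Re(s)=c$ is absolutely integrable and carries the key factor $(X\Delta)^{c} \asymp |\Theta|^{-1/d-\varepsilon}$. This yields $|\Phi_f(\rho e(\Theta))| \ll X^{1/d - L/d^2 + \varepsilon}$, and since $d^2 \leq d \cdot 2^{d-1}$ for $d \geq 2$ the exponent $L/d^2 \geq L/(d \cdot 2^{d-1})$, which dominates the desired bound. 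The case $a_0 = 0$ proceeds identically with the simpler integrand appearing at the end of the proof of Lemma \ref{1}.

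The principal technical difficulty I expect lies in the minor-arc error for $d \geq 3$: in Lemma \ref{lemma3}'s error term $q^{1-2^{1-d}+\varepsilon} \log X$, $q$ can be as large as $X^{(d-1)/d}$, which overshoots $X^{1/d - 1/(d\cdot 2^{d-1}) + \varepsilon}$. To address this I would bypass Lemma \ref{lemma3} and instead apply Weyl's inequality directly to each weighted exponential sum $T_j(\Theta) = \sum_n e^{-jf(n)/X} e(jf(n)\Theta)$, using partial summation on the truncations $\sum_{n \leq N} e(jf(n)\Theta)$. This would yield $T_j \ll (X/j)^{(1-2^{1-d})/d + \varepsilon}$ up to exponentially small tails, and summing in $j$ recovers the bound. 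The delicate step is ensuring the Dirichlet denominator $q_j = q/(q,j)$ of $j\Theta$ lies in Weyl's good range $[(X/j)^{1/d}, (X/j)^{(d-1)/d}]$ uniformly in $j$; this is handled by a dyadic partition over $(q,j)$, with the exceptional ranges absorbed by trivial bounds.
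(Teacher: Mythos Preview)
Your minor-arc plan eventually lands on the right idea, but your auxiliary-major-arc plan has a real gap.

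For \eqref{minores}, you correctly diagnose that routing through Lemma~\ref{lemma3} fails for $d\ge 3$ because its error term $q^{1-2^{1-d}+\varepsilon}\log X$ is uncontrolled when $q$ ranges up to $X^{(d-1)/d}$. Your fallback --- partial summation against $\sum_{n\le u} e(jf(n)\Theta)$ followed by Weyl's inequality applied termwise in $j$ --- is exactly what the paper does. The ``delicate step'' you flag is resolved cleanly: for each $j$ apply Dirichlet's theorem to $ja_d\Theta$ with denominator bound $X^{1-1/d}$, so that $q_j\le X^{1-1/d}$, and observe that $\Theta\notin\mathfrak{M}$ forces $ja_dq_j>X^{1/d}$ (otherwise $\Theta$ would lie in some major arc). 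Weyl's inequality then gives three terms, each of which, after integrating $u^{\lambda}$ against the weight $jdu^{d-1}X^{-1}e^{-ja_du^d/X}$, contributes $\ll X^{1/d+\varepsilon-1/(d2^{d-1})}$ upon summing in $j\le X$. No dyadic partition over $(q,j)$ is needed.

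For \eqref{auxM01}, the Mellin--Barnes estimate on the line $\Re(s)=c=1/d+\varepsilon$ does not work as you describe. The integrand is indeed absolutely integrable in $t$, but its $L^1$-norm depends badly on $\Delta$: from \eqref{pred2} and \eqref{stirl} the decay in $t$ is only $e^{-\Delta|t|}$, so the $t$-integral is of size $\asymp\Delta^{-1/2-c}$ (after accounting for the $|s|^{c-1/2}$ factor). Hence the bound you obtain is $(X\Delta)^c\Delta^{-1/2-c}=X^{c}\Delta^{-1/2}\asymp X^{1/d+\varepsilon}(X|\Theta|)^{1/2}$, which for $|\Theta|$ near $X^{1/d-1}$ is $\gg X^{3/(2d)}$, worse than the trivial bound. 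The saving $(X\Delta)^c$ you isolate is completely swamped by the length of the $t$-integration. The paper instead treats \eqref{auxM01} by the \emph{same} Weyl argument used for the minor arcs: apply Dirichlet to $ja_d\Theta$ with the modified denominator bound $q_j\le X^{1-L/d}$, and use disjointness of major arcs together with $\Theta\notin(-X^{L/d-1},X^{L/d-1})$ to conclude $ja_dq_j>X^{1/d}$; the computation then runs identically with $q_j\le X^{1-L/d}$ in place of $q_j\le X^{1-1/d}$, yielding the exponent loss $L/(d2^{d-1})$.
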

\begin{proof}
Let $J$ be a real positive number. It follows from  \eqref{tail} and \eqref{tailexp} that  
\begin{align}
|\Phi_f \big(\rho e(\Theta) \big)| & \ll_f \sum_{j=1}^J \frac{1}{j} \int_0^\infty f'(u)jX^{-1} \exp(-j f(u)/X) \Bigg | \sum_{n \le u}  \e ( j f(n) \Theta ) \Bigg| du+\Big(\frac{X}{J} \Big)^{\frac{1}{d}} \nonumber \\
& \ll_f \sum_{j=1}^J \frac{1}{j} \Big( \int_0^1+\int_1^{\infty} \Big) f'(u)jX^{-1} \exp(-j f(u)/X) \Bigg | \sum_{n \le u}  \e ( j f(n) \Theta ) \Bigg| du+\Big(\frac{X}{J} \Big)^{\frac{1}{d}} \nonumber \\
& \ll_f \sum_{j=1}^J \int_1^{\infty} d u^{d-1} jX^{-1} \exp(-j a_d u^d/X) \Bigg | \sum_{n \le u}  \e ( j f(n) \Theta ) \Bigg| du+\Big(\frac{X}{J} \Big)^{\frac{1}{d}}+ \frac{J}{X} \nonumber \\
& \ll_f \sum_{j=1}^J \int_0^{\infty} d u^{d-1} jX^{-1} \exp(-j a_d u^d/X) \Bigg | \sum_{n \le u}  \e ( j f(n) \Theta ) \Bigg| du+ \Big(\frac{X}{J} \Big)^{\frac{1}{d}}+\frac{J}{X} \label{phibd},
\end{align}
where we use the fact that all of the coefficients of the polynomial are non-negative on the third line of the above. For each $j$, we use Dirichlet's approximation theorem to choose $r_j \in \Z_{\geq 0}$ and $q_j \in \mathbb{N}$ such that
\begin{equation*}
\Bigg| j a_d \Theta-\frac{r_j}{q_j}   \Bigg| \leq q_j^{-1} X^{\frac{1}{d}-1} \quad \text{and} \quad 	q_j \leq X^{1-\frac{1}{d}}.
\end{equation*}
Applying Weyl's inequality \cite[Theorem~4.3]{NA} we have 
\begin{equation} \label{weylint}
\Bigg | \sum_{n \le u}  \e ( j f(n) \Theta ) \Bigg| \ll_{f,\varepsilon} u^{1+\varepsilon-2^{1-d}}+u^{1+\varepsilon} q_j^{-2^{1-d}}+u^{1+\varepsilon} \Big( \frac{q_j}{u^d} \Big)^{2^{1-d}}.
\end{equation}
Note that for any $\lambda>0$, integration by parts yields 
\begin{equation} \label{lambda}
\int_{0}^{\infty} u^{\lambda} \Big(jd u ^{d-1} X^{-1} \exp \big({-j a_d u^d/X} \big)  \Big) du \ll_{f} \Big(\frac{X}{j} \Big)^{\frac{\lambda}{d}}.
\end{equation}
Since $\Theta \notin \mathfrak{M}$ we must have $j a_d q_j \geq X^{\frac{1}{d}}$. We choose $J=X$. Thus \eqref{phibd}, \eqref{weylint}, \eqref{lambda} and the fact that $q_j \leq X^{1-\frac{1}{d}}$ imply that 
\begin{align*}
|\Phi_f \big(\rho e(\Theta) \big)| & \ll_{f,\varepsilon}1+ \sum_{j=1}^J \frac{1}{j} \Bigg(  \Big(\frac{X}{j} \Big)^{\frac{1+\varepsilon}{d}-\frac{1}{d2^{d-1}}}+\Big(\frac{X}{j} \Big)^{\frac{1+\varepsilon}{d}} q_j^{-2^{1-d}}+\Big(\frac{X}{j} \Big)^{\frac{1+\varepsilon}{d}-\frac{1}{2^{d-1}}} q_j^{2^{1-d}}    \Bigg)   \\
& \ll_{f,\varepsilon}1+ X^{\frac{1+\varepsilon}{d}-\frac{1}{d 2^{d-1}}} \sum_{j=1}^J  \big( j^{-1-\frac{1+\varepsilon}{d}+\frac{1}{d2^{d-1}} }+ j^{-1-\frac{1+\varepsilon}{d}+\frac{1}{2^{d-1}} } \big) \\
& \ll_{f,\varepsilon} X^{\frac{1+\varepsilon}{d}-\frac{1}{d 2^{d-1}}}.
\end{align*}
This proves \eqref{minores}. 
 
We now prove \eqref{auxM01}. Since $\Theta \in \mathcal{M}(1,0) \setminus (-X^{\frac{L}{d}-1},X^{\frac{L}{d}-1})$, $\Theta$ does not lie on any other major arc because they are disjoint. In particular, if $\Theta$ satisfies 
\begin{equation*}
\Bigg | \Theta-\frac{r}{q} \Bigg | \leq q^{-1} X^{\frac{L}{d}-1},
\end{equation*}
with $r \in \mathbb{Z}_{\geq 0}$, $q \in \mathbb{N}$, then $q>X^{\frac{1}{d}}$. Otherwise $\Theta$ would lie on a major arc or in the interval $(-X^{\frac{L}{d}-1},X^{\frac{L}{d}-1})$. Thus for each $j$, we use we use Dirichlet's approximation theorem to choose $r_j \in \Z_{\geq 0}$ and $q_j \in \mathbb{N}$ such that
\begin{equation*}
\Bigg| j a_d \Theta-\frac{r_j}{q_j}   \Bigg| \leq q_j^{-1} X^{\frac{L}{d}-1} \quad \text{and} \quad 	q_j \leq X^{1-\frac{L}{d}}.
\end{equation*}
By the above argument we see that $j q_j a_d>X^{\frac{1}{d}}$. Repeating the above computation with the new bound $q_j \leq X^{1-\frac{L}{d}}$ we obtain \eqref{auxM01}.

\end{proof}
Now we can begin the proof of Theorem \ref{theorem1}.
\begin{proof}
Fix $R<1$. Fix $L$ sufficiently small depending on $R, \boldsymbol{\alpha}$ and $\varepsilon$ as in Lemma \ref{1}. We first examine the major arc $\mathfrak{M}(1,0)$. Recall that
\begin{equation*}
\Psi _f(z) = \exp \big({\Phi _f}(z) \big).
\end{equation*}
Applying Lemma \ref{1}, we have the following for all $|\Theta| \leq X^{\frac{L}{d}-1}$:
\begin{equation} \label{initial}
\rho ^{ - n} \Psi _f(\rho e(\Theta )) = {\rho ^{ - n}}\exp \big({\Xi _f} \big( \rho e(\Theta ) \big) \big(1 + O_{f,R,\varepsilon}(X^{-R+\varepsilon}) \big),
\end{equation}
where  $\Xi _f(\rho e(\Theta ))$ is equal to the right hand side \eqref{auxphi} or \eqref{auxphi2} without the error term. Also note that we have
\begin{equation*}
\frac{X}{{1 - 2\pi i X\Theta }} = X\Delta {e^{i\phi }}
\end{equation*}
where $\phi = \arg(1+2 \pi i X \Theta)$. Thus $0 < |\phi|  \leq \pi/2$, so $0 < \cos(\phi/d)<1$. This means
\begin{equation} \label{impmod}
\bigg| {{{\bigg( {\frac{X}{{1 - 2\pi i X \Theta }}} \bigg)}^{\frac{1}{d}}}} \bigg| = {(X\Delta )^{\frac{1}{d}}}.
\end{equation}
We now study \eqref{partit} over the interval $\big[-\frac{3}{8 \pi X},\frac{3}{8 \pi X} \big]$. In other words, the main term of \eqref{initial}:
\begin{equation} \label{partit2}
\int_{-\frac{3}{8\pi X}}^{\frac{3}{8\pi X}} \rho^{-n} \exp \Big(\Xi_f \big(\rho e ( \Theta ) \big) - 2\pi i n \Theta \Big) d\Theta .
\end{equation}
The function
\begin{equation*}
K(\Theta):=\Xi _f  \big(\rho e(\Theta ) \big) - 2\pi i n \Theta 
\end{equation*}
over the region of integration can be Taylor expanded about $\Theta=0$ as
\begin{equation*}
 K(\Theta ) = \mathcal{C}+\zeta(0,\boldsymbol{\alpha}) \log \frac{X}{{{a_d}}}-Y{(2\pi X\Theta )^2} + G(\Theta ) 
\end{equation*}
where
\begin{equation*}
G(\Theta ) = \sum_{j = 3}^\infty  (u_j Y+v_j ) (2\pi iX\Theta )^j,
\end{equation*}
and
\begin{align*}
u_j&:={j-1+\frac{1}{d} \choose j} {1+\frac{1}{d} \choose 2}^{-1} \\
v_j&:=\zeta(0,\boldsymbol{\alpha}) \Big( \frac{1}{j}-\frac{u_j}{2} \Big)+O_f \Big( \frac{1}{X^{\frac{1}{d}}} \Big).  
\end{align*}

Since we have $R<1$ fixed, the last double summation in both \eqref{W1} and \eqref{W2} vanishes. Since all of the $\alpha_j \in \mathbb{R}_{>0}$ for $1 \leq j \leq d-1$, the induction argument establishing formulae for the residues $c_m$ on \cite[pp.~247--248]{MW} shows that $c_m \in \mathbb{R}$ for all $m \in \mathbb{N}$. Thus $Y,u_j,v_j \in \mathbb{R}$ for all $j$.  

Thus the integrand \eqref{partit2} becomes
\begin{equation} \label{parity}
  \bigg( {\frac{X}{{{a_d}}}} \bigg)^{\zeta (0,\boldsymbol{\alpha} )} \exp \Big( \mathcal{C}+\frac{n}{X} \Big) \int_{ - \frac{3}{8\pi X}}^{\frac{3}{8\pi X}} \exp \Big ( - Y{{(2\pi X\Theta )}^2} + G(\Theta ) \Big) d \Theta. 
\end{equation}
Considering the integral in \eqref{parity}, it will be convenient to use a parity argument
\begin{multline} \label{evenl}
\int_0^{\frac{3}{8\pi X}} \Big(\exp \big(G(\Theta) \big)+\exp \big(G(-\Theta) \big) \Big) \exp \Big(-Y(2\pi X\Theta)^2 \Big) d \Theta  \\
= \real \bigg( \int_0^{\frac{3}{8\pi X}} 2 \exp \big(G(\Theta)-Y(2\pi X \Theta)^2 \big) d\Theta \bigg).
\end{multline}
Making the change of variable $\phi = (2\pi X \Theta)^2 Y$ together with \eqref{evenl} allows us to re-write the integral on the right-hand side of \eqref{parity} as follows
\begin{equation} \label{upd}
 \frac{1}{2 \pi a_d^{\zeta(0,\boldsymbol{\alpha})}} \frac{\exp \Big( \mathcal{C}+\frac{n}{X} \Big)}{X^{1-\zeta(0,\boldsymbol{\alpha})} Y^{\frac{1}{2}}} \int_0^{9Y/16} \operatorname{Re} \Big(\exp \big( - \phi  + H(\phi ) \big) \Big){\phi ^{ - 1/2}} d\phi  
\end{equation}
where
\begin{equation*}
H(\phi):= \sum_{j=3}^\infty i^j (u_j+v_jY^{-1})\phi^{j/2}Y^{1-j/2}.
\end{equation*}
For a fixed $J$, we further decompose $H$ into  
\begin{equation*}
H(\phi):=H_J(\phi)+ \sum_{j=2J+3}^{\infty} i^j (u_j+v_jY^{-1})\phi^{j/2}Y^{1-j/2}.
\end{equation*}

Recalling the facts
\begin{equation*}
0 \leq \frac{a_{d-1}}{a_d} \leq \frac{d}{2} \quad \text{and} \quad \zeta(0,\boldsymbol{\alpha})=\zeta(0)-\frac{1}{d} \sum_{j=1}^{d-1} \alpha_j=-\frac{1}{2}-\frac{a_{d-1}}{d a_d},
\end{equation*}
we have 
\begin{equation} \label{zeta0}
|\zeta(0,\boldsymbol{\alpha})|<1.
\end{equation}
Thus 
\begin{equation*}
\frac{v_j}{u_j}=\zeta(0,\boldsymbol{\alpha}) \Big(\frac{1}{u_j j}-\frac{1}{2} \Big)+O_f \Big( \frac{1}{X^{\frac{1}{d}}} \Big).
\end{equation*}
Combining \eqref{zeta0} with the observation $j u_j \geq 1$ for any $j \geq 3$, we have 
\begin{equation*}
\Big |\zeta(0,\boldsymbol{\alpha}) \Big( \frac{1}{u_j j}-\frac{1}{2} \Big)  \Big | \leq \frac{1}{2}.
\end{equation*}
Thus for all $X$ sufficiently large we have $|v_j| \leq |u_j|$ for all $j \geq 3$. Also note that for $j \geq 2$ we have 
\begin{align*}
|v_{2j}| \leq u_{2j} \leq u_4=\frac{6d^2+5d+1}{12d^2}.
\end{align*}
Thus for $0 \leq \phi \leq 9 Y/16$, $Y$ sufficiently large, and $d \geq 2$, then 
\begin{align}
\text{Re} H_J(\phi)&=\text{Re} \sum_{j=3}^{2J+2} i^j (u_j+v_j Y^{-1}) \phi^{j/2} Y^{1-j/2}= \sum_{j=2}^{J+1} (-1)^j (u_{2j}Y+v_{2j}) \phi^j Y^{-j} \nonumber \\
& \leq a_4 (Y+1) \sum_{j=2}^{2J+2} \Big(\frac{\phi}{Y} \Big)^j \leq a_4 (Y+1) \sum_{j=2}^{\infty} \Big(\frac{\phi}{Y} \Big)^j \nonumber  \\
&=\frac{6d^2+5d+1}{12d^2} (Y+1) \frac{(\phi/Y)^2}{1-\phi/Y}  \nonumber \\
&\leq \frac{9}{7} \times \frac{6d^2+5d+1}{12d^2} (1+Y^{-1}) \phi \nonumber \\
&=\Big(\frac{18}{28}+\frac{15}{28 d}+\frac{3}{28d^2} \Big) (1+Y^{-1}) \phi \nonumber \\
&\leq \frac{105}{112} \Big(1+\frac{1}{105} \Big) \phi \nonumber \\
&<\Big(1-\frac{1}{2016} \Big) \phi \label{HJ},
\end{align}
where $Y>105$ is assumed.  Also notice by definition that $H_J(\phi) \rightarrow H(\phi)$ pointwise as $J \rightarrow \infty$. Thus for a parameter $Z>0$, we have 
\begin{equation*}
\int_Z^{9Y/16} {\operatorname{Re} \Big(\exp \big( - \phi  + H(\phi ) \big) \Big){\phi ^{ - 1/2}} d\phi }  \ll {Z^{ - 1/2}}{e^{ - Z/2016}}.
\end{equation*}
Taking $Z=2016 J \log Y$ yields
\begin{equation*}
\int_Z^{9Y/16} {\operatorname{Re} \Big(\exp  \big({ - \phi  + H(\phi )} \big)  \Big){\phi ^{ - 1/2}} d\phi }  \ll {Y^{ - J}}.
\end{equation*}
Consequently \eqref{upd} becomes
\begin{equation} \label{integral3}
 \frac{1}{2 \pi a_d^{\zeta(0,\boldsymbol{\alpha})}} \frac{\exp \Big( \mathcal{C}+\frac{n}{X} \Big)}{X^{1-\zeta(0,\boldsymbol{\alpha})} Y^{\frac{1}{2}}}  \Bigg( {\int_0^Z {\operatorname{Re} \Big(\exp \big( {- \phi  + H(\phi )} \big) \Big){\phi ^{ - 1/2}} d\phi }  + O({Y^{ - J}})} \Bigg). 
\end{equation}

When $0 \leq \phi \leq Z$, summing up a geometric series yields the estimate
\begin{equation*}
\sum_{j=2J+3}^{\infty} i^j (u_j+v_j Y^{-1}) \phi^{j/2} Y^{1-j/2} \ll \frac{\phi^{J+3/2} Y^{-1/2-J} }{1-(\phi/Y)^{1/2}} \ll \phi^{J+3/2} Y^{-1/2-J}
\end{equation*}
and so 
\begin{equation*}
\exp \Bigg( \sum_{j=2J+3}^{\infty} i^j (u_j+v_j Y^{-1}) \phi^{j/2} Y^{1-j/2} \Bigg)=1+O \big(\phi^{J+3/2} Y^{-1/2-J} \big).
\end{equation*}
Hence we now focus to the truncated sum $H_J(\phi)$. The integral occurring in \eqref{integral3} becomes
\begin{equation} \label{integral4}
\int_{0}^Z \Re \Big( \exp \big( -\phi+H_J(\phi) \big) \Big) \Big(1+O \big( \phi^{J+3/2} Y^{-1/2-J} \big) \Big) \phi^{-1/2} d \phi. 
\end{equation}
Since \eqref{HJ} holds, the error term in \eqref{integral4} contributes $O(Y^{-1/2-J})$. Thus \eqref{integral3} becomes
\begin{equation*}
 \frac{1}{2 \pi a_d^{\zeta(0,\boldsymbol{\alpha})}}  \frac{\exp \Big(\mathcal{C}+\frac{n}{X} \Big) }{X^{1-\zeta(0,\boldsymbol{\alpha})} Y^{\frac{1}{2}}} \Bigg( \int_0^Z \operatorname{Re} \Big( \exp \big( - \phi  + H_J(\phi ) \big) \Big) \phi ^{ - 1/2} d \phi   + O(Y^{ - J}) \Bigg).
\end{equation*}

Now consider
\begin{equation*}
\exp \big( H_J(\phi) \big) = \sum_{j=0}^\infty \frac{H_J(\phi)^j}{j!}.
\end{equation*}
For $0 \leq \phi \leq Z$ we have 
\begin{equation*}
H_J (\phi)=\sum_{j=3}^{2J+2} i^j(u_j+v_jY^{-1}) \phi^{j/2} Y^{1-j/2} \ll \sum_{j=3}^{\infty} Y (\phi/Y)^{\frac{j}{2}} \ll Y^{-\frac{1}{2}} \phi^{\frac{3}{2}} \leq Y^{-\frac{1}{4}}.
\end{equation*}
Thus 
\begin{equation*}
\Bigg | \int_{0}^Z \exp(-\phi) \text{Re} \big(H_J(\phi) \big)^j \phi^{-\frac{1}{2}} d \phi \Bigg| \leq Y^{-\frac{j}{4}},
\end{equation*}
and hence
\begin{equation*}
\sum_{j=4J+4}^{\infty} \int_0^Z \exp({-\phi}) \Re \Big( \frac{H_J(\phi)^j}{j!} \Big) \phi^{-1/2} d \phi \ll Y^{-J}.
\end{equation*}
Consequently we need to estimate
\begin{equation*}
\int_0^Z \exp(-\phi) \sum_{j=0}^{4J+3} \Re \Big( \frac{H_J(\phi)^j}{j!} \Big) \phi^{-1/2}.
\end{equation*}
Since $c_j,d_j,Y \in \mathbb{R}$ and can write 
\begin{equation*}
H_J(\phi)=\sum_{j=3}^{2J+2} \Big(u_j \big(Y^{-\frac{1}{2}} \big)^{j-2} +v_j \big(Y^{-\frac{1}{2}} \big)^{j} \Big)(i \phi^{1/2})^j,
\end{equation*}
we see that $H_J(\phi)$ can be viewed as a real polynomial in $i \phi^{1/2}$ of degree $2J+2$ for a fixed $Y$. Therefore 
\begin{equation*}
\sum_{j=0}^{4J+3} \frac{1}{j!} \big( H_j(\phi) \big)^j= \sum_{h=0}^{(2J+2)(4J+3)} p_h (Y^{-1/2})(i\phi^{1/2})^h
\end{equation*}
where the coefficients $p_h(z)$ are polynomials in $z$ of degree at most $h$. It can be verified that 
\begin{equation*}
p_0(\phi)=1, \quad p_1(\phi)=p_2(\phi)=0, \quad \text{and} \quad p_h(0)=0,
\end{equation*}
and for $h \geq 3$ the polynomial $p_h$ is even (resp. odd) when $h$ is even (resp. odd). Also note that  
\begin{equation*}
\int_{Z}^{\infty} \exp(-\phi) \phi^{\frac{h-1}{2}} d \phi \leq Y^{-J} \int_{0}^{\infty} \exp({-\phi}) \phi^{\frac{h-1}{2}} d \phi \ll Y^{-J}.
\end{equation*}
Therefore 
\begin{equation*}
\int_{-\frac{3}{8 \pi X}}^{\frac{3}{8 \pi X}} \rho^{ - n} \exp \Big( \Xi _f (\rho e(\Theta )) - 2 \pi i n \Theta \Big) d \Theta   \\
 =  \frac{1}{2 \pi a_d^{\zeta(0,\boldsymbol{\alpha})}}  \frac{\exp \Big(\mathcal{C}+\frac{n}{X} \Big)}{X^{1-\zeta(0,\boldsymbol{\alpha})} Y^{\frac{1}{2}}} \big(I + O(Y^{ - J}) \big)
\end{equation*}
where
\begin{align}
I&= \int_{0}^Z \Re \big( \exp (-\phi+H_J(\phi) ) \big) \phi^{-1/2} d \phi \nonumber  \\
&=\sum_{\substack{h=0 \\ h \text{ even} }}^L p_{h}(Y^{-1/2})  \int_{0}^Z \exp(-\phi) \phi^{\frac{h-1}{2}} d \phi +O(Y^{-J}) \nonumber \\
&= \Gamma \Big( \frac{1}{2} \Big) + \sum_{h = 2}^{L/2} p_{2h} (Y^{-1/2}) \Gamma \Big( h + \frac{1}{2} \Big) + O(Y^{ - J}) \label{polyy}.
\end{align}
Note that $p_{2h}$ is an even polynomial, so \eqref{polyy} is indeed a polynomial in the variable $Y^{-1}$. So let $w_q$ be the coefficient of $Y^{-q}$ in \eqref{polyy}. Thus 
\begin{multline}  \label{mainc}
\int_{ - \frac{3}{8\pi X}}^{\frac{3}{8 \pi X}} \rho^{ - n} \exp ( \Xi _f (\rho e(\Theta )) - 2 \pi i n \Theta ) d \Theta \\
=\frac{1}{2 \pi a_d^{\zeta(0,\boldsymbol{\alpha})}}  \frac{\exp \Big(\mathcal{C}+\frac{n}{X} \Big)}{X^{1-\zeta(0,\boldsymbol{\alpha})} Y^{\frac{1}{2}}} \Big(\pi^{1/2}+\sum_{q=1}^{J-1} w_q Y^{-q}+O(Y^{ - J}) \Big). 
\end{multline}
Recalling \eqref{initial} we have 
\begin{multline} \label{mainc2}
\int_{ - \frac{3}{8\pi X}}^{\frac{3}{8 \pi X}} \rho^{ - n} \exp ( \Phi _f (\rho e(\Theta )) - 2 \pi i n \Theta ) d \Theta \\
=\frac{1}{2 \pi a_d^{\zeta(0,\boldsymbol{\alpha})}}  \frac{\exp \Big(\mathcal{C}+\frac{n}{X} \Big)}{X^{1-\zeta(0,\boldsymbol{\alpha})} Y^{\frac{1}{2}}} \Big(\pi^{1/2}+\sum_{q=1}^{J-1} w_q Y^{-q}+O(Y^{ - J})+O_{f,R,\varepsilon} \big( X^{-R+\varepsilon} \big) \Big).
\end{multline}

We now need to consider \eqref{partit} over $\mathcal{U} \setminus [-\frac{3}{8 \pi X},\frac{3}{8 \pi X}]$. It suffices to prove
\begin{equation} \label{remain} 
 \int_{\mathcal{U} \setminus [ - \frac{3}{8 \pi X},\frac{3}{8 \pi X}]} \rho ^{- n} \exp \Big (\Phi_f \big(\rho e(\Theta) \big) - 2\pi in\Theta \Big) d\Theta \ll \frac{1}{2 \pi a_d^{\zeta(0,\boldsymbol{\alpha})}}  \frac{\exp \Big(\mathcal{C}+\frac{n}{X} \Big)}{X^{1-\zeta(0,\boldsymbol{\alpha})} Y^{\frac{1}{2}}} Y^{ - J}.
\end{equation}
We treat the left side of \eqref{remain} over $\mathcal{U} \setminus \big[-\frac{3}{8 \pi X}, \frac{3}{8 \pi X} \big]$ in three stages.  

On the set $(-X^{\frac{L}{d}-1},X^{\frac{L}{d}-1} ) \setminus [-3/(8 \pi X),3/(8 \pi X)]$, we have 
\begin{equation*}
\Delta \leq \frac{4}{5}. 
\end{equation*}
Fix $\eta>0$ small such that $(1+\eta) (4/5)^{\frac{1}{d}}<1$. Applying Lemma \ref{1} and recalling \eqref{impmod} then we see that
\begin{equation*}
  \big| {\Phi _f}(\rho e(\Theta )) \big | \leq (1+\eta) \Big(\frac{4}{5} \Big)^{\frac{1}{d}} \frac{a_d^{-1/d}}{d} \Gamma \Big( \frac{1}{d} \Big) \zeta \Big( {1 + \frac{1}{d}} \Big) X^{\frac{1}{d}},
   \end{equation*}
for sufficiently large $X$. This implies \eqref{remain} over the set $(-X^{\frac{L}{d}-1},X^{\frac{L}{d}-1} ) \setminus [-3/(8 \pi X),3/(8 \pi X)]$.

Applying Lemma \ref{minor}, it is clear that \eqref{remain} is satisfied when the integral is restricted to the domain  $\Big(\mathfrak{M}(1,0) \setminus (-X^{\frac{L}{d}-1},X^{\frac{L}{d}-1} )\Big) \cup \mathfrak{m}$.

The third stage is to study the integral in \eqref{remain} on the auxiliary major arcs $\mathfrak{M}(q,a)$. Hence we have $1<q \leq X^{\frac{1}{d}}$ and $\beta=\Theta-a/q$ satisfies $|\beta| \leq q^{-1} X^{\frac{1}{d}-1}$. 
Thus by Lemma \ref{lemma3} we have 
\begin{equation} \label{lemma3eq}
\big | \Phi_f \big( \rho e(\Theta) \big) \big| \leq a_d^{-1/d} \Gamma \Big( 1 + \frac{1}{d} \Big) X^{\frac{1}{d}} \sum\limits_{j = 1}^\infty  \frac{|S(q_j,\tilde{a}_j,f)|}{j^{(d + 1)/d} q_j} +O_{\varepsilon} \big( X^{\frac{1}{d}-\frac{1}{d 2^{d-1}}+2 \varepsilon} \big)
\end{equation}
for all sufficiently large $X$. For each $q_j$, we need to bound the series 
\begin{equation*}
\sum_{j=1}^{\infty} \frac{|S(q_j,\tilde{a}_j,f)|}{j^{\frac{d+1}{d}} q_j}.
\end{equation*}
Recalling the notation from Lemma \ref{lemma3} we have 
\begin{equation*}
\tilde{a}_j=\frac{aj}{(j,q)} \quad \text{and} \quad q_j=\frac{q}{(j,q)}.
\end{equation*}

When $q \mid j$ then $q_j=1$ and hence $|S(q_j,\tilde{a}_j,f)|=1$. When $j \centernot\mid q$ we have $q_j>1$ and then applying Lemma \ref{constant} we have $|S(q_j,\tilde{a}_j,f)| \leq (1-\delta_f)q_j$ where $\delta_f>0$ is defined by $C_f:=1-\delta_f$. Thus   
\begin{align}
\sum_{j=1}^{\infty} \frac{|S(q_j,\tilde{a}_j,f)|}{j^{\frac{d+1}{d}} q_j} &= \sum_{\substack{j=1 \\ q \mid j}}^{\infty} \frac{1}{j^{1+\frac{1}{d}}}+\sum_{\substack{j=1 \\ j \nmid q}}^{\infty} \frac{1-\delta_f}{j^{1+\frac{1}{d}}} \nonumber \\
&=(1-\delta_f) \Big(1-\frac{1}{q^{1+\frac{1}{d}}} \Big) \zeta \Big(1+\frac{1}{d} \Big)+\frac{1}{q^{1+\frac{1}{d}}} \zeta \Big (1+\frac{1}{d} \Big) \nonumber \\
&=\Big(1-\delta_f+\frac{\delta_f}{q^{1+\frac{1}{d}}} \Big) \zeta \Big(1+\frac{1}{d} \Big) \nonumber \\
&<\Big(1-\frac{\delta_f}{2} \Big) \zeta \Big(1+\frac{1}{d} \Big). \label{const2}
\end{align}
Combining \eqref{lemma3eq} and \eqref{const2} we obtain
\begin{equation*}
\big |\Phi_f \big( \rho e(\Theta) \big) \big| \leq \Big(1-\frac{\delta_f}{2} \Big) \frac{a_d^{-1/d}}{d} \Gamma \Big(\frac{1}{d} \Big) \zeta \Big(1+\frac{1}{d} \Big) X^{\frac{1}{d}},
\end{equation*} 
and we see that \eqref{remain} holds on the auxiliary major arcs.

Recalling \eqref{mainc2} and \eqref{remain}, any choice $1<J<dR$ will mean the error term $O_{f,R,\varepsilon}(X^{-R+\varepsilon})$ can be absorbed.
\end{proof}

Here we offer some remarks on possible alternative strategies to the one adopted in this paper.
\begin{remark}
For the sake of this remark assume $a_0=0$ and $a_j>0$ for $1 \leq j \leq d$. Following the proof of \cite[Thm. C4]{MV}, Parseval's theorem in the form
\begin{equation*}
\int_0^\infty  f(t)g(t)t^{s - 1}dt  = \frac{1}{2\pi i}\int_{(c)} \mathcal{F}(w)\mathcal{G}(s - w) dw
\end{equation*}
where $c>0$ and $\mathcal{F}$ and $\mathcal{G}$ are the Mellin transforms of $f$ and $g$, respectively, one can show by induction that for $d \in \N$ we have
\begin{align} \label{parsevalinduction}
I_d(s) &:= \int_0^\infty  \exp \bigg( - jx\sum\limits_{r = 1}^d a_rt^r \bigg)t^{s - 1}dt \nonumber \\
       & = \frac{1}{d}\sum\limits_{n_1 = 0}^\infty  \cdots \sum\limits_{n_{d - 1} = 0}^\infty   \frac{( - 1)^{n_1 +  \cdots  + n_{d - 1}}}{n_1! \cdots n_{d - 1}!}a_1^{n_1} \cdots a_{d - 1}^{n_{d - 1}}a_d^{ - \tfrac{1}{d}(s + \sum\nolimits_{r = 1}^{d - 1} r n_r )} \nonumber \\
       &\quad \times {(jx)^{n_1 +  \cdots  + n_{d - 1} - \tfrac{1}{d}(s + \sum\nolimits_{r = 1}^{d - 1} r n_r )}}\Gamma \bigg( \frac{1}{d}\bigg( s + \sum\limits_{r = 1}^{d - 1} r n_r  \bigg) \bigg)  
\end{align}
for $\real(x)>0$ and $\real(s)>0$. If we let $x=1/X - 2\pi i\Theta$ and $s=1$, then the main term of Lemma 3 becomes $I_d(1)$.
\end{remark}

\begin{remark}
A temptation is to take a multivariable approach to Lemma \ref{1} by means of repeatedly applying the Cahen--Mellin transform. By this transform, we have
\begin{align} \label{specialcahen}
\exp (- j a_r n^r x ) = \frac{1}{2\pi i}\int_{(c_r)} \Gamma (s_r)(j a_r n^r x)^{-s_r} ds_r 
\end{align}
where $x=1/X - 2\pi i \Theta$ and $c_r>0$ for $r=1,\cdots,d$. This allows us to write
\begin{align} \label{dfoldintegrals}
\Phi_{f}(\rho e(\Theta )) &= \sum_{j = 1}^\infty  \sum_{n = 1}^\infty  \frac{1}{j} \exp \bigg(  - j\bigg( \sum_{r = 1}^d a_r n^r  \bigg)\bigg( \frac{1}{X} - 2\pi i\Theta  \bigg) \bigg) \nonumber \\
& = \sum_{j = 1}^\infty \sum_{n = 1}^\infty \frac{1}{j}  \prod_{r = 1}^d \bigg( \frac{1}{2\pi i}\int_{(c_r)} \Gamma ({s_r})(ja_r n^r x)^{-s_r}ds_r \bigg) \nonumber \\ 
& = \frac{1}{(2\pi i)^d}\int_{(c_1)} \cdots \int_{(c_d)}  \bigg( \prod\limits_{r = 1}^d \Gamma (s_r)  \bigg)\zeta \bigg( \sum\limits_{r = 1}^d s_r  + 1 \bigg) \nonumber \\
	 & \quad \times \zeta \bigg( \sum\limits_{r = 1}^d r s_r  \bigg)\bigg( \prod\limits_{r = 1}^d a_r^{ - s_r}  \bigg)x^{ - \sum\nolimits_{r = 1}^d s_r } ds_d \cdots ds_1  
\end{align}
provided that 
\begin{align} \label{conditionsonmultiples}
\sum_{r=1}^d \sigma_r > \delta \quad \textnormal{and} \quad \sum_{r=1}^d k_r \sigma_r > 1 + \delta
\end{align}
where $s_r=\sigma_r+it_r$ with $\sigma_r, t_r \in \R$ and $0 < \delta <1$ for $r=1,\cdots,d$. However, dealing with the main terms arising from computing the above $d$-fold integral is not as convenient as appealing to the Matsumoto--Weng zeta function and it is not entirely clear how to obtain satisfactory error terms.

\end{remark}

\section{Acknowledgments}
The authors wish to thank Professor Alexandru Zaharescu, Professor Ole Warnaar, Professor Wadim Zudilin and the anonymous referee for their helpful comments on the manuscript. The authors are grateful for the encouragement from Professor Bruce Berndt and Amita Malik. The first author is supported as a teaching assistant at the University of Illinois at Urbana--Champaign, and the second author is supported in the capacity of a J.L Doob Assistant Professor at the same institution.

\end{document}